\documentclass[12pt,reqno]{amsart}

\usepackage{amscd}

\usepackage{amsthm}
\usepackage{amstext}
\usepackage{amssymb}
\usepackage{amsmath,amscd}
\usepackage{enumerate}
\usepackage{mathrsfs}
\usepackage{amsrefs}
\usepackage[utf8]{inputenc}
\usepackage{geometry}
\usepackage{xcolor}
\usepackage{adjustbox}
\usepackage[all]{xy}
\usepackage{stmaryrd}
\usepackage{tikz}
\usepackage{booktabs}
\usepackage{longtable}
\usepackage{hyperref}
\usepackage{caption}
\numberwithin{equation}{section}

\newcommand{\mat}[2][ccccccccccccccccccccccccc]{
  \left(
    \begin{array}{#1}
      #2\\
    \end{array}
  \right)
}

\newcommand{\Fr}{\textnormal{Fr}}

\newcommand{\TFr}{\textnormal{TFr}}

\newcommand{\Tr}{\textnormal{Tr}}

\newtheorem{theorem}{Theorem}[section]
\newtheorem{proposition}[theorem]{Proposition}
\newtheorem{lemma}[theorem]{Lemma}

\newtheorem{conjecture}[theorem]{Conjecture}

\theoremstyle{definition}
\newtheorem{definition}[theorem]{Definition}
\newtheorem{remark}[theorem]{Remark}
\newtheorem{example}[theorem]{Example}

\newcommand{\cB}{\mathcal B}

\newcommand{\cR}{\mathcal R}

\newcommand{\C}{\mathbb C}
\newcommand{\F}{\mathbb F}

\newcommand{\N}{\mathbb N}
\newcommand{\Q}{\mathbb Q}
\newcommand{\R}{\mathbb R}
\newcommand{\Z}{\mathbb Z}

\title{Invertible Calabi--Yau Orbifolds over Finite Fields II}

\author{Marco Aldi}
\address{Department of Mathematics and Applied Mathematics\\
Virginia Commonwealth University\\
Richmond, VA 23284, USA}

\author{Andrija Peruni\v{c}i\'{c}}
\address{}

\begin{document}

\begin{abstract}
We state a conjecture about the zeta function of crepant resolutions of Berglund--H\"ubsch orbifold hypersurfaces over a finite field. In addition to numerical evidence, we show that our conjectural zeta function satisfies the Weil conjectures and we elucidate its connection with Monsky--Washnitzer cohomology.
\end{abstract}

\maketitle

\section{Introduction}

Building upon the work of Greene and Plesser \cite{GP}, Berglund and H\"ubsch \cite{BH} formulated a systematic procedure for constructing mirror pairs of Calabi--Yau orbifolds. As part of an innovative vertex algebra approach to Berglund--H\"ubsch mirror symmetry, Borisov \cite{B1} introduced a complex whose cohomology calculates the Chen-Ruan cohomology of Berglund--H\"ubsch orbifolds. In \cite{AP} we introduced a deformation of Borisov's complex which admits a natural Frobenius endomorphism when defined $p$-adically. In \cite{AP2} we explicitly computed the eigenvalues of the (inverse) Frobenius endomorphism acting on the cohomology of the $p$-adic deformed Borisov complex. Based on these calculations,  we conjectured \cite{AP2} that the supertrace of the Frobenius endomorphism acting on the cohomology of the deformed Borisov complex computes the number of $\mathbb F_p$-points of any crepant resolution of Berglund--H\"ubsch orbifolds of Calabi--Yau type. So far, this conjecture has been proved for all Berglund--H\"ubsch elliptic curves, for diagonal matrices with pairwise coprime weights, and for all $14$ weighted diagonal K3 surfaces classified in~\cite{Goto}.

In this paper, we provide additional evidence for the conjecture stated in \cite{AP2}. Our starting point is the observation that this conjecture can be naturally upgraded to a statement (Conjecture~\ref{conj:supertrace-counts-points}) about the counting of  $\mathbb F_{p^\nu}$-points of crepant resolutions of Berglund--H\"ubsch orbifolds. Motivated by this, we introduce a zeta function $\zeta_p(A,G,p)$ for the action of the Frobenius endomorphism on the cohomology of the deformed Borisov complex of a Berglund--H\"ubsch matrix $A$ over $\mathbb F_p$ with symmetry group $G$. Since Conjecture~\ref{conj:supertrace-counts-points} implies that in the Calabi--Yau case $\zeta_p(A,G,t)$ is equal to the local zeta function of a smooth projective variety, we expect $\zeta_p(A,G,t)$ to formally satisfy the Weil conjectures. As a non-trivial check for our conjecture, we prove that this is indeed the case. We also offer numerical evidence for our conjecture by comparing $\zeta_p(A,G,t)$ with known calculations of the local zeta function of several K3 surfaces \cites{Goto, GotoThesis, Whitcher21} and of the Fermat quintic threefold \cite{CdlORV03}. Furthermore, we calculate the supertrace of our Frobenius operator in the case of Greene--Plesser mirror quotients of the quintic with non-trivial symmetry groups $G\supsetneq\langle J\rangle$.

Finally, we describe an alternative proof strategy for our conjecture for smooth weighted homogeneous Berglund--H\"ubsch hypersurfaces with $G=\langle J\rangle$ by comparing the orbifold trace formula directly against the Monsky--Washnitzer point-counting formula~\cite{M}. Doing this requires carefully carrying out some delicate cancellations. We do so in detail for Fermat-type (diagonal) matrices and illustrate in an example additional subtleties that can arise with non-diagonal potentials.

The accompanying SageMath code---implementing the orbifold trace formula, zeta function assembly, and computational verifications---is available at \cite{P}.

\section{Preliminaries}\label{sec:preliminaries}

In this section we review the basic framework developed in \cites{AP, AP2}. Throughout the paper, vectors are thought of as row vectors unless otherwise specified and $e_i$ represents the $i$-th vector in the standard basis of $\mathbb Z^n$.

\begin{definition}\label{def:berglund-hubsch}
Let $p$ be a prime number. An $n\times n$ matrix with non-negative integer entries $A$ is {\it Berglund--H\"ubsch} over $\mathbb F_p$ if
\begin{enumerate}
\item $A$ is invertible;
\item the polynomial $W_A(x)=\sum_{i=1}^n x^{e_i A}$ is quasi-homogeneous and such that, in $\mathbb F_p[x]=\mathbb F_p[x_1,\ldots,x_n]$, $(\partial_{x_1}W_A,\ldots \partial_{x_n}W_A)$ is a regular sequence;
\item $\det(A)$ divides $p-1$.
\end{enumerate}

\end{definition}

 Given a Berglund--H\"ubsch matrix $A$ over $\mathbb F_p$, its transpose $A^{T}$ is also Berglund--H\"ubsch over $\mathbb F_p$ \cite{AP}. We denote by $\widetilde \cB_A$ be the subspace of $p$-adic overconvergent series in the even variables $\{x_i,y_i\}_{i=1}^n$ as well as in the odd variables $\{\theta_i\}_{i=1}^n$, spanned by those monomials $x^\gamma y^\lambda \theta^I$ such that $\lambda A^{-T}\ge 0$. We define $\mathcal B_A$ to be the quotient of $\widetilde \cB_A$ by the ideal generated by all monomials $x^\gamma y^\lambda \theta^I$ with the property that $\gamma A^{-1}\lambda^T>0$. There is a natural grading on $\cB_A$ such that the degree of the general monomial $x^\gamma y^\lambda \theta^I$ is equal to $q(\lambda,I)+q^\vee(\lambda,I)$, where $q^\vee(\lambda,I)$ is the number of indices $i\in\{1,\ldots,n\}$ such that  $(\lambda A^{-T})_i>(\lambda A^{-T})_iI_i$ and $q(\lambda,I)=I_1+\cdots+I_n+q^\vee(\lambda,I)$. For a fixed $\pi\in \C_p$ such that $\pi^{p-1}=-p$, we define the operator $D_A:\mathcal B_A\to \mathcal B_A$ such that
\begin{align*}
D_A(x^\gamma y^\lambda \theta^I)&=\sum_{i=1}^n\gamma_i x^\gamma y^\lambda \theta^{I+e_i} + \pi\sum_{i,j=1}^n A_{ji}\, x^{\gamma + e_j A}y^\lambda \theta^{I+e_i}\\
&+\sum_{i=1}^n\pi^{-1}(\lambda A^{-T})_i\, x^\gamma y^\lambda \partial_{\theta_i} \theta^I + \sum_{i=1}^nx^\gamma y^{\lambda + e_iA^T} \partial_{\theta_i} \theta^I
\end{align*}
for all monomials $x^\gamma y^\lambda \theta^I$ in $\mathcal B_A$.

\begin{proposition}[\cite{AP}]\label{prop:2.8}
Let $A$ be a Berglund--H\"ubsch matrix over $\mathbb F_p$. Then
\begin{enumerate}[1)]
    \item $(\mathcal B_A,D_A)$ is a chain complex.
    \item $(\cB_A,D_A)$ decomposes into the direct sum of sub-complexes $(\cB_A(\gamma,\lambda),D_A)$ labeled by classes $[(\gamma,\lambda)]\in \Z^{2n}/(\Z^{2n}(A\oplus A^T))\cong G_{A^T}\times G_A$.
\item For each $(\gamma,\lambda)\in G_{A^T}\times G_A$, the cohomology of $(\cB_A(\gamma,\lambda),D_A)$ is at most one dimensional.
\end{enumerate}
\end{proposition}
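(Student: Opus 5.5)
I will prove the three parts in order. Throughout I write $D_A=\delta_1+\pi\delta_2+\pi^{-1}\delta_3+\delta_4$ for the four sums in the definition.

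\emph{Part 1).} The operator $D_A$ is odd, so it suffices to show that $D_A^2$ vanishes on every monomial $x^\gamma y^\lambda\theta^I$ of $\cB_A$. The plan is to regard $D_A$ as a Koszul-type differential. The first two sums are $\sum_i \Phi_i\,\theta_i$, where $\Phi_i$ acts by $x^\gamma\mapsto \gamma_i x^\gamma+\pi\,x\partial_{x_i}W_A$. These $\Phi_i$ are the commuting operators $x_i\partial_{x_i}+\pi x_i\partial_{x_i}W_A$, i.e.\ $e^{-\pi W_A}\,x_i\partial_{x_i}\,e^{\pi W_A}$. Similarly, the last two sums are $\sum_i\Psi_i\,\partial_{\theta_i}$. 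Here $\Psi_i$ multiplies $y^\lambda$ by $\pi^{-1}(\lambda A^{-T})_i$ and also shifts $\lambda\mapsto\lambda+e_iA^T$. These operators commute as well, since they only shift $\lambda$ additively and the coefficient is linear in $\lambda$ with respect to $A^{-T}$. Moreover every $\Phi_i$ commutes with every $\Psi_j$, because they act on disjoint sets of variables. Hence
\[
D_A^2=\sum_{i,j}\Phi_i\Psi_j\bigl(\theta_i\partial_{\theta_j}+\partial_{\theta_j}\theta_i\bigr)=\sum_i\Phi_i\Psi_i .
\]
This is not zero in the free algebra, and that is why one passes to the quotient $\cB_A$. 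The key step is to show that $\sum_i\Phi_i\Psi_i$ lands in the ideal spanned by monomials with $\gamma A^{-1}\lambda^T>0$, modulo signs and cancellations. I would verify this by expanding the four products:
\begin{itemize}
\item the $\gamma_i(\lambda A^{-T})_i$ terms give $\gamma A^{-1}\lambda^T$ up to the pairing, which vanishes on $\cB_A$ unless the monomial is already zero;
\item the $\pi\cdot\pi^{-1}$ cross terms $\sum_{i,j}A_{ji}(\lambda A^{-T})_i\,x^{\gamma+e_jA}$ are matched against $\sum_i\gamma_i\,y^{\lambda+e_iA^T}$;
\item the term $x^{\gamma+e_jA}y^{\lambda+e_iA^T}$ has pairing $\gamma A^{-1}\lambda^T+\lambda_j+\gamma_i+A_{ji}>0$ after summing, so it dies in the quotient.
\end{itemize}
Getting these cancellations and signs exactly right is the main obstacle. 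I expect the argument to use $\gamma A^{-1}\lambda^T\le 0$ on surviving monomials together with nonnegativity of $\lambda A^{-T}$.

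\emph{Part 2).} Each term of $D_A$ changes $(\gamma,\lambda)$ by $0$, by $e_jA$ in $\gamma$, or by $e_iA^T$ in $\lambda$. So $D_A$ preserves the class of $(\gamma,\lambda)$ in $\Z^{2n}/\Z^{2n}(A\oplus A^T)$, and $\cB_A$ splits accordingly. The quotient is $\Z^n/\Z^nA\times\Z^n/\Z^nA^T$, which is identified with $G_{A^T}\times G_A$ by the standard Berglund--H\"ubsch duality (the map $v\mapsto vA^{-1}$ mod $\Z^n$, and its transpose).

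\emph{Part 3).} I would fix a class and filter by the $\pi$-adic order, or equivalently by the total degree in $(\gamma,\lambda)$. The associated graded differential is then essentially the Koszul differential of $(\partial_{x_i}W_A)$ tensored with that of the dual data. Regularity of these sequences (Definition~\ref{def:berglund-hubsch}(2), which also holds for $A^T$) makes the graded cohomology concentrated in a Jacobian-ring piece. Inside a fixed $G$-class this piece is at most one-dimensional, because the Jacobian ring of an invertible polynomial has at most one monomial basis element in each character class once $\det A\mid p-1$. A spectral-sequence or overconvergent-completeness argument, in the style of Dwork--Monsky, then bounds the actual cohomology by the graded one. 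Controlling convergence in this last step is the second delicate point.
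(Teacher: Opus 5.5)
The gap is in 3). Your key input there is false, and in fact 3) itself fails as stated for some Berglund--H\"ubsch matrices. Parts 1) and 2) are fine in outline.

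You assert that the Jacobian ring of an invertible polynomial has at most one monomial basis element in each class of $\Z^n/\Z^nA$. No condition such as $\det A\mid p-1$ can rescue this, because the obstruction is a count. Take the loop $W_A=x_1^3x_2+x_1x_2^3$, which is Berglund--H\"ubsch over $\F_{17}$. Its Milnor number is $9$, while $|G_{A^T}|=\det A=8$. Concretely, $x_1^2$ and $x_2^2$ are independent in the Jacobian ring, since the partials are cubics. In the complex they appear as $x_1^3x_2\theta_1\theta_2$ and $x_1x_2^3\theta_1\theta_2$, with $\gamma=e_1A$ and $\gamma=e_2A$, and both lie in the trivial class. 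This is the familiar pair of $G_A$-invariants of an even-length loop.

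The paper's own Example~\ref{ex:L2L2} shows the same thing. Table~\ref{tab:L2L2-traces} has four generators with $\lambda=0$ and $\gamma A^{-1}\in\{0,1\}^4$, all in the single class $(0,0)$. It also has $21$ untwisted generators but only $64/4=16$ classes in $G^T$. So 3), read literally with labels in $G_{A^T}\times G_A$, fails whenever $A$ contains an even loop block, and no argument can prove it in that generality. A correct version must either label by the standard monomial rather than its class, or prove injectivity of the map from basis to class type by type where it actually holds (e.g.\ for $A=\mathrm{diag}(a_1,\ldots,a_n)$, where the exponents $1\le\gamma_i\le a_i-1$ are distinct mod $a_i$). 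The paper itself gives no proof of this proposition; it only cites \cite{AP}.

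Even where injectivity holds, your filtration step is not justified.
\begin{itemize}
\item Under a degree filtration, the $x$-part of $D_A$ becomes the Koszul operator of the logarithmic derivatives $x_i\partial_{x_i}W_A$, not of $\partial_{x_i}W_A$. These need not form a regular sequence: for $x_1^2x_2+x_2^2x_3+x_3^3$ all three vanish along the $x_2$-axis.
\item The same $\theta_i$ act as creation operators for the $x$-data and as annihilation operators for the $y$-data, so the complex is not a tensor product of two Koszul complexes.
\end{itemize}
Definition~\ref{def:berglund-hubsch}(2) therefore does not concentrate the graded cohomology. What has to absorb the boundary contributions is the coordinatewise constraint $\gamma_i(\lambda A^{-T})_i=0$ that cuts out $\cB_A$. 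This constraint underlies the reduction $H(\cB_A^\lambda,D_A)\cong H(\cB^0_{A^\lambda},D_{A^\lambda})$, and your sketch never uses it.

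The same constraint also simplifies 1). Since $\gamma\ge0$ and $\lambda=(\lambda A^{-T})A^T\ge0$, the pairing $\gamma A^{-1}\lambda^T=\sum_i\gamma_i(\lambda A^{-T})_i$ is a sum of nonnegative terms. Hence surviving monomials satisfy $\gamma_i(\lambda A^{-T})_i=0$ for every $i$, and $D_A$ preserves the ideal. Each of the four pieces of $\Phi_i\Psi_i(x^\gamma y^\lambda)$ then vanishes by itself:
\begin{itemize}
\item the first has coefficient $\gamma_i(\lambda A^{-T})_i=0$;
\item the second raises the pairing by $\gamma_i$;
\item the third raises it by $\lambda_j\ge A_{ji}(\lambda A^{-T})_i$;
\item the fourth raises it by $\gamma_i+\lambda_j+A_{ji}$.
\end{itemize}
In each of the last three cases the increase is positive whenever the coefficient is nonzero. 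There are no cross-cancellations to arrange, contrary to what you anticipated.
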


Consider a Berglund--H\"ubsch matrix $A$ over $\mathbb F_p$. Given a subgroup $G< G_A$, its {\it transpose} \cite{Kra} is the subgroup $G^T <G_{A^T}$ such that $\gamma\in G_{A^T}$ belongs to $G^T$ if and only if $\gamma A^{-1} \lambda^T \in \mathbb Z$ for all $\lambda\in G$.

If $\lambda\in G_A$, then the direct sum $\mathcal B_A^\lambda$ of all $\mathcal B_A(\gamma,\lambda)$ over all $\gamma\in G_{A^T}$ is a subcomplex of $(\mathcal B_A,D_A)$. Furthermore \cite{AP}, let $A^\lambda$ be the matrix such that $W_{A^\lambda}(x)$ is obtained from $W_A(x)$ by setting $x_i=0$ whenever $(\lambda A^{-T})_i\in \mathbb Q\setminus \mathbb Z$. Then $A^\lambda$ is Berglund--H\"ubsch over $\mathbb F_q$ and $H(\mathcal B_{A^\lambda}^0,D_{A^\lambda})\cong H(\mathcal B_A^\lambda,D_A)$.

Let $A$ be a Berglund--H\"ubsch matrix over $\mathbb F_p$ and let $G<G_A$ be a subgroup. We define $\mathcal B_A^G$ to be the direct sum of all $\mathcal B_A(\gamma,\lambda)$ over all $(\gamma,\lambda)\in G^T\times G$.

Consider an $n\times n$ Berglund--H\"ubsch matrix $A$ over $\mathbb F_q$. Assume the representative in the equivalence class of $\lambda\in G_A$ modulo $\mathbb Z^nA^T$ is chosen so that $0\le (\lambda A^{-T})_i<1$ for every $i\in \{1,\ldots,n\}$. The {\it age} of $\lambda$ is ${\rm age}(\lambda)=\sum_{i=1}^n(\lambda A^{-T})_i$. Similarly, the {\it dual age} of $\gamma\in G_{A^T}$ is defined to be ${\rm age}^\vee(\gamma)=\sum_{i=1}^n(\gamma A^{-1})_i$. We also define $\dim(\lambda)$ (resp.\ $\dim(\gamma)$) to be the number of indices $i$ for which $(\lambda A^{-T})_i\in \mathbb Z$ (resp.\ $(\gamma A^{-1})_i\in \mathbb Z$).

In what follows we assume that both $G$ and $G^T$ contain $J=e_1+\ldots+e_n$ so that the Calabi--Yau condition $JA^{-T}J^T\in \mathbb Z$ is satisfied and the age of any $\lambda\in G$ and $\gamma\in G^T$ is an integer.

\begin{definition} We define the {\it Frobenius endomorphism} $\Fr_A:\cB_A\to \cB_A$ by the formula
\begin{equation}
\Fr_A(x^\gamma y^\lambda \theta^I)=p^{-q(\lambda,I)} \sum c_k \pi^{|k|} x^{(\gamma+kA)/p}y^\lambda \theta^I\,,
\end{equation}
where the sum ranges over all $k=(k_1,\ldots,k_n)\in \N^n$ such that $\gamma+kA\in p\N^n$. The coefficients $c_k$ are such that $c_0=1$ and $c_k=0$ if $k_j<0$ for some $j$, while
\[
k_jc_k=\pi c_{k-e_j}+\pi^p c_{k-pe_j}
\]
otherwise. Given a non-negative integer $\nu$, we denote by $\Fr_A^\nu$ the $\nu$-fold iterated composition $ (\Fr_A)^\nu$ acting on $\cB_A$.
\end{definition}

It follows from \cite{AP2} that each $(\Fr_A)^\nu$ commutes with $D_A$ and thus descends to an operator $H(\Fr_A^\nu)$ acting on the cohomology of $(\cB_A,D_A)$. Furthermore, by Proposition 3.4 of \cite{AP2}, $H(\Fr_A)$ acts diagonally on the standard monomial basis for the cohomology of $(\cB_A,D_A)$. In particular, direct inspection shows that, whenever $(\gamma,\lambda)$ labels a cohomology generator, the eigenvalue of $H(\Fr_A)$ on $H(\cB_A(\gamma,\lambda),D_A)$ is
\begin{equation}\label{eq:epsilon}
\epsilon(\gamma,\lambda)=(-p)^{{\rm age}^\vee(\gamma)}p^{-n}\Gamma_p(\gamma A^{-1})\,.
\end{equation}

\begin{definition}\label{def:twisted-frob}
The {\it twisted Frobenius} $\TFr_A$ is the operator on $H(\cB_A^G,D_A)$ defined by $\TFr_A=p^{{\rm age}+n-1}H(\Fr_A)$, where $p^{\rm age}$ denotes the operator diagonalized in the monomial basis with eigenvalue $p^{{\rm age}(\lambda)}$ on $H(\cB_A(\gamma,\lambda),D_A)$.
\end{definition}

\begin{remark}
The twist by $p^{{\rm age}+n-1}$ has a natural geometric origin. Let $X_A$ be the hypersurface obtained by setting $W_A=0$ in weighted projective space with weights $w_i=m(JA^{-T})_i$, where $m$ is the smallest integer such that each $w_i$ is an integer. Let $Y_{A,G}$ be the orbifold $[X_A/(G/\langle J\rangle)]$. The standard shift by the dimension of $Y_{A,G}$ needed for the ``arithmetic'' Frobenius to count points in the sense of \cite{Behrend93} is $p^{n-2}$. The additional factor $p^{\rm age+1}$ accounts for the contribution of twisted sectors as studied in \cite{Rose07}, where the action of the geometric Frobenius on orbifolds is modified by a power of $p$.
\end{remark}

\begin{definition}\label{def:supertrace}
Let $A$ be an $n\times n$ Berglund--H\"ubsch matrix over $\mathbb F_p$ and let $G$ be a subgroup of $G_A$ such that $J\in G\cap G^T$. For any positive integer $\nu$, we denote the supertrace of $\TFr_A^\nu=(p^{{\rm age}+n-1}H(\Fr_A))^\nu$ acting on $H(\cB_A^G,D_A)$ by
\begin{equation}\label{eq:supertrace-nu}
{\rm ST}_{p^\nu}(A,G)=\sum_{\gamma\in G^T,\lambda\in G} (-1)^{\dim(\lambda)+\nu\,{\rm age}^\vee(\gamma)} p^{\nu({\rm age}(\lambda)+{\rm age}^\vee(\gamma)-1)} \Gamma_p(\gamma A^{-1})^\nu\,\delta(\gamma,\lambda)\,,
\end{equation}
where $\delta(\gamma,\lambda)\in \{0,1\}$ is the dimension of the total cohomology of the subcomplex $(\cB_A(\gamma,\lambda),D_A)$.
\end{definition}

\section{Conjectures}

In this section we discuss our main conjecture regarding point counting, its equivalent formulation in the language of zeta functions, and the necessity of some of the assumptions made.

\begin{conjecture}\label{conj:supertrace-counts-points}
Let $A$ be an $n\times n$ Berglund--H\"ubsch matrix over $\F_p$ with $n\ge 3$, and let $G$ be a subgroup of $G_A$ such that $J\in G\cap G^T$. If $Y_{A,G}$ is the orbifold $[X_A/(G/\langle J\rangle)]$ and $Z_{A,G}$ is any crepant resolution of $Y_{A,G}$, then for all $\nu\ge 1$,
\[
\#Z_{A,G}(\F_{p^\nu}) = {\rm ST}_{p^\nu}(A,G).
\]
At $\nu=1$, this is \cite[Conjecture~4.1]{AP2}.
\end{conjecture}

\begin{remark}\label{rem:eigenvalue-remark}
By \eqref{eq:epsilon}, the eigenvalue of $\TFr_A$ on $H(\cB_A(\gamma,\lambda),D_A)$ is
\begin{equation}\label{eq:alpha}
\alpha(\gamma,\lambda) = p^{{\rm age}(\lambda)+n-1}\,\epsilon(\gamma,\lambda) = p^{{\rm age}(\lambda)-1}\,(-p)^{{\rm age}^\vee(\gamma)}\,\Gamma_p(\gamma A^{-1})\,.
\end{equation}
The supertrace sign in \eqref{eq:supertrace-nu} is $(-1)^{\dim(\lambda)}$, coming from the total degree $\dim(\lambda)+2\,{\rm age}(\lambda)-2$ assigned by the Hodge bigrading.  Since $s+r = \dim(\lambda)+2\,{\rm age}(\lambda)-2$, we have $(-1)^{s+r}=(-1)^{\dim(\lambda)}$ and \eqref{eq:supertrace-nu} can be rewritten as
\begin{equation}\label{eq:supertrace-sr}
{\rm ST}_{p^\nu}(A,G)=\sum_i (-1)^{s_i+r_i}\,\alpha_i^\nu\,,
\end{equation}
where the sum is over all basis elements with $\delta(\gamma,\lambda)=1$ and $\alpha_i=\alpha(\gamma_i,\lambda_i)$. At $\nu=1$, this recovers the supertrace formula of \cite[Theorem~3.6]{AP2}. The trace contribution ${\rm tc}_i$ from \cite{AP2} (that is, the summand of ${\rm ST}_{p^\nu}(A,G)$ labeled by $(\gamma_i,\lambda_i)$) is related to the eigenvalue by $\alpha_i = (-1)^{\dim(\lambda)}\,{\rm tc}_i = (-1)^{s_i+r_i}\,{\rm tc}_i$.
\end{remark}

\begin{definition}
Let $A$ be a Berglund--H\"ubsch matrix over $\mathbb F_p$ and let $G$ be a subgroup of $G_A$ such that $J=G\cap G^T$. We define the {\it Berglund--H\"ubsch zeta function} of the pair $(A,G)$ over $\mathbb F_p$ to be the generating function
\begin{equation}\label{eq:bh-zeta}
\zeta_p(A,G,t)= \exp\!\left(\sum_{\nu=1}^{\infty} {\rm ST}_{p^\nu}(A,G)\,\frac{t^\nu}{\nu}\right)\,.
\end{equation}
\end{definition}

\begin{remark}
We can use the Berglund--H\"ubsch zeta function to restate Conjecture \ref{conj:supertrace-counts-points} as follows. Let $A$ be an $n\times n$ Berglund--H\"ubsch matrix over $\mathbb F_p$ with $n\ge 3$ and let $G$ be a subgroup of $G_A$ such that $J\in G\cap G^T$. Then $\zeta_p(A,G,t)$ is equal to the local zeta function of any crepant resolution of $Y_{A,G}(\mathbb F_p)$.
\end{remark}

\begin{example}\label{ex:3.3}
While we will see in Example \ref{ex:L2L2} and Example \ref{ex:quintic} that ${\rm ST}_{p^\nu}(A,G)$ correctly calculates the number of points of $Z_{A,G}(\mathbb F_{p^\nu})$ outside of the $\det(A)\,|\, (p-1)$ range, the hypothesis cannot be dropped altogether. For instance, consider the Berglund--H\"ubsch matrix
\begin{equation}\label{eq:3.3}
A = \mat{2 & 1 & 0 \\ 0 & 2 & 1 \\ 0 & 0 & 3}
\end{equation}
so that $X_A$ defines a cubic curve in the projective plane. Accordingly, $JA^{-T}J^T=1$ and the Calabi--Yau condition is satisfied. The orbifold cohomology calculated with respect to $G=\langle J\rangle$ has $4$ basis elements. Table~\ref{tab:chain-223-traces} lists all trace contributions and eigenvalues.

\begin{table}[ht]
\centering
\caption{Orbifold trace contributions, with $A$ as in \eqref{eq:3.3} and $G=\langle J\rangle$.}\label{tab:chain-223-traces}
\renewcommand{\arraystretch}{1.1}
{\footnotesize
\begin{tabular}{@{}llll@{}}
\toprule
$H^{s,r}$ & Element & $\gamma A^{-1}$ & $\alpha_i$ \\
\midrule
$H^{0,0}$ & $y_1 y_2 y_3$ & $(0,0,0)$ & $1$ \\
\midrule
$H^{0,1}$ & $x_1 x_2 x_3\, e_1 e_2 e_3$ & $(\tfrac{1}{2},\tfrac{1}{4},\tfrac{1}{4})$ & $-\,\Gamma_p(\tfrac{1}{2})\,\Gamma_p(\tfrac{1}{4})\,\Gamma_p(\tfrac{1}{4})$ \\
\midrule
$H^{1,0}$ & $x_1 x_2^2 x_3^3\, e_1 e_2 e_3$ & $(\tfrac{1}{2},\tfrac{3}{4},\tfrac{3}{4})$ & $p\,\Gamma_p(\tfrac{1}{2})\,\Gamma_p(\tfrac{3}{4})\,\Gamma_p(\tfrac{3}{4})$ \\
\midrule
$H^{1,1}$ & $y_1^2 y_2^2 y_3^2$ & $(0,0,0)$ & $p$ \\
\bottomrule
\end{tabular}
}
\renewcommand{\arraystretch}{1.0}
\end{table}

If $p=7$, then the exponents of $W_A$ all divide $p-1=6$ but $\det(A)=12\nmid 6$. Brute force shows that $X_A(\mathbb F_7)$ has 8 points. On the other hand using \cite{P} we calculate
\[
{\rm ST}_p(A,\langle J\rangle)=6\cdot 7^2 + 7^3 + 5\cdot 7^4 + 5\cdot 7^5 + 5\cdot 7^6 + 4\cdot 7^7 + \cdots \in\Z_7,
\]
which is not an integer. On the other hand, if $p = 13$,  then $\det(A)=12\mid 12=p-1$ and ${\rm ST}_{13}(A,\langle J\rangle)=20$ matches the brute force calculation of $\#X_A(\mathbb F_{13})$. In this case, the Berglund--H\"ubsch zeta function
\[
\zeta_p(A,\langle J\rangle,t) = \frac{1+6t+pt^2}{(1-t)(1-pt)},
\]
matches the local zeta function of $X_A(\mathbb F_{13})$.
\end{example}

\begin{example}\label{ex:3.6}
Even though Conjecture \ref{conj:supertrace-counts-points} is motivated in part by the Crepant Resolution Conjecture and the latter is proved \cite{Yasuda04} in characteristic zero for Gorenstein varieties that are not necessarily Calabi--Yau, the condition $J\in G\cap G^T$ cannot be completely removed. For instance, consider
\begin{equation}\label{eq:non-cy-4chain}
A = \mat{2 & 1 & 0 & 0 \\ 0 & 2 & 1 & 0 \\ 0 & 0 & 2 & 1 \\ 0 & 0 & 0 & 3}
\end{equation}
and $G=\langle J\rangle$ with $|G|=3$ so that $X_A$ defines a smooth cubic in $\mathbb{P}^3$. Accordingly, the Calabi--Yau condition fails: $JA^{-T}J^T=\tfrac{4}{3}\notin\Z$.

The orbifold cohomology has $8$ basis elements. The $6$ elements from the untwisted sector $\lambda=0$ have integer ages and well-defined eigenvalues. The $2$ elements from the twisted sectors $\lambda=J,2J$ have fractional ages $\tfrac{4}{3}$ and $\tfrac{8}{3}$, so their eigenvalues involve $p^{1/3}\notin\Z_p$. Table~\ref{tab:chain-2223-traces} lists all trace contributions.

\begin{table}[ht]
\centering
\caption{Orbifold trace contributions for $A$ as in \eqref{eq:non-cy-4chain}. and $G=\langle J\rangle$}\label{tab:chain-2223-traces}
\renewcommand{\arraystretch}{1.1}
{\scriptsize
\begin{tabular}{@{}llll@{}}
\toprule
$H^{s,r}$ & Element & $\gamma A^{-1}$ & $\alpha_i$ \\
\midrule
$H^{1/3,1/3}$ & $y_1 y_2 y_3 y_4$ & $(0,0,0,0)$ & $p^{1/3}$ \\
\midrule
$H^{1,1}$ & $x_1 x_2 x_3 x_4^3\, e_1 e_2 e_3 e_4$ & $(\tfrac{1}{2},\tfrac{1}{4},\tfrac{3}{8},\tfrac{7}{8})$ & $p\,\Gamma_p(\tfrac{1}{2})\,\Gamma_p(\tfrac{1}{4})\,\Gamma_p(\tfrac{3}{8})\,\Gamma_p(\tfrac{7}{8})$ \\
 & $x_1 x_2 x_3^2 x_4^2\, e_1 e_2 e_3 e_4$ & $(\tfrac{1}{2},\tfrac{1}{4},\tfrac{7}{8},\tfrac{3}{8})$ & $p\,\Gamma_p(\tfrac{1}{2})\,\Gamma_p(\tfrac{1}{4})\,\Gamma_p(\tfrac{7}{8})\,\Gamma_p(\tfrac{3}{8})$ \\
 & $x_1 x_2^2 x_3 x_4^2\, e_1 e_2 e_3 e_4$ & $(\tfrac{1}{2},\tfrac{3}{4},\tfrac{1}{8},\tfrac{5}{8})$ & $p\,\Gamma_p(\tfrac{1}{2})\,\Gamma_p(\tfrac{3}{4})\,\Gamma_p(\tfrac{1}{8})\,\Gamma_p(\tfrac{5}{8})$ \\
 & $x_1 x_2^2 x_3^2 x_4\, e_1 e_2 e_3 e_4$ & $(\tfrac{1}{2},\tfrac{3}{4},\tfrac{5}{8},\tfrac{1}{8})$ & $p\,\Gamma_p(\tfrac{1}{2})\,\Gamma_p(\tfrac{3}{4})\,\Gamma_p(\tfrac{5}{8})\,\Gamma_p(\tfrac{1}{8})$ \\
 & $x_1^2 x_2 x_3 x_4^2\, e_1 e_2 e_3 e_4$ & $(1,0,\tfrac{1}{2},\tfrac{1}{2})$ & $p\,\Gamma_p(1)\,\Gamma_p(\tfrac{1}{2})^2$ \\
 & $x_1^2 x_2 x_3^2 x_4\, e_1 e_2 e_3 e_4$ & $(1,0,1,0)$ & $p\,\Gamma_p(1)^2$ \\
\midrule
$H^{5/3,5/3}$ & $y_1^2 y_2^2 y_3^2 y_4^2$ & $(0,0,0,0)$ & $p^{5/3}$ \\
\bottomrule
\end{tabular}
}
\renewcommand{\arraystretch}{1.0}
\end{table}

Let $p=73$. Here $\det(A)=24\mid 72=p-1$. All six products of $p$-adic gamma functions appearing in the contributions from $H^{1,1}$ evaluate to $p$, giving a total contribution of $N_{\mathrm{partial}} = 6\cdot 73 = 438$. The brute-force count over $\mathbb{P}^3(\F_{73})$ gives $N_{\mathrm{true}}=5841$, and
\[
N_{\mathrm{true}} - N_{\mathrm{partial}} = 5841 - 438 = 5403 = 1 + 73 + 73^2 = \#\mathbb{P}^2(\F_{73}).
\]
The difference is exactly $\#\mathbb{P}^2(\F_p)$, the contribution the fractional-age twisted sectors would supply if the Calabi--Yau condition held.
\end{example}

\begin{remark}\label{rem:section4-generalization}
In \cite{AP2}, Conjecture~\ref{conj:supertrace-counts-points} was established in the $\nu=1$ and $G=\langle J\rangle$ case for: 1) all Berglund--H\"ubsch elliptic curves, 2) all weighted diagonal hypersurfaces with coprime weights, and 3) all 14 weighted diagonal K3 surfaces. The proofs given there can be straightforwardly adapted to establish Conjecture~\ref{conj:supertrace-counts-points} in the $\nu>1$ case.
\end{remark}

\section{The Weil Conjectures for $\zeta_p(A,G,t)$}

Since the Weil conjectures hold for any crepant resolution of $Y_{A,G}$, the following offers a non-trivial check for Conjecture~\ref{conj:supertrace-counts-points}:

\begin{theorem}\label{thm:3.6}
The Berglund--H\"ubsch zeta function formally satisfies the Weil conjectures. More precisely, let $A$ be an $n\times n$ Berglund--H\"ubsch matrix over $\mathbb F_p$ with $n\ge 3$ and let $G$ be a subgroup of $G_A$ such that $J\in G\cap G^T$. Then
\begin{enumerate}[1)]
\item $\zeta_p(A,G,t)$ is a rational function of $t$;
\item $\zeta_p(A,G,1/(p^{n-2}t))=\pm p^{\frac{(n-2)\chi}{2}}t^\chi \zeta_p(A,G,t)$ where $\chi$ denotes the Euler characteristic of any crepant resolution of $Y_{A,G}(\mathbb C)$.
\item For each $\alpha_i$ as in \eqref{eq:supertrace-sr}, $|\alpha_i|=p^{(s_i+r_i)/2}$, where $|\cdot|$ denotes the standard Euclidean absolute value.
\end{enumerate}
\end{theorem}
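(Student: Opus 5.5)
Rationality is immediate from \eqref{eq:supertrace-sr}. Since ${\rm ST}_{p^\nu}(A,G)=\sum_i(-1)^{s_i+r_i}\alpha_i^\nu$ is a finite sum with $\nu$-independent eigenvalues, exponentiating gives
\[
\zeta_p(A,G,t)=\prod_i(1-\alpha_it)^{-(-1)^{s_i+r_i}}.
\]
This is a rational function whose numerator collects the basis elements with $s_i+r_i$ odd, i.e.\ $\dim(\lambda_i)$ odd, and whose denominator collects those with $\dim(\lambda_i)$ even. Part 1) therefore needs nothing beyond Definition~\ref{def:supertrace} and Remark~\ref{rem:eigenvalue-remark}.

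For part 3) I would first make each $\alpha_i$ an algebraic number, using the Gross--Koblitz formula. Because $\det(A)\mid p-1$, every entry of $\gamma A^{-1}$ has the form $a/(p-1)$. Gross--Koblitz then identifies $(-p)^{{\rm age}^\vee(\gamma)}\Gamma_p(\gamma A^{-1})$, up to the standard normalization and a root of unity, with $\prod_j g(\chi^{-a_j})$, a product of Gauss sums over $\F_p$ for a fixed Teichm\"uller character $\chi$. The sign and power conventions need checking. Each coordinate $(\gamma A^{-1})_j$ equal to $0$ contributes $\Gamma_p(0)=1$ and a trivial Gauss sum factor, with absolute value $1$ after the standard normalization $g(\mathbf 1)=-1$. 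Each non-integral coordinate contributes a factor of absolute value $p^{1/2}$. Hence
\[
|\alpha(\gamma,\lambda)|=p^{{\rm age}(\lambda)-1+\frac{n-\dim(\gamma)}{2}}.
\]
It then remains to show that for cohomology generators ($\delta(\gamma,\lambda)=1$) we have $n-\dim(\gamma)=\dim(\lambda)$. I would get this from the reduction $H(\cB_A^\lambda)\cong H(\cB^0_{A^\lambda})$. The untwisted generators of $A^\lambda$ are the Jacobian-ring classes $x^{\gamma}\theta^{\rm all}$ with all coordinates of $\gamma (A^\lambda)^{-1}$ non-integral (plus the $y$-monomials with $\gamma=0$). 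The $\dim(\lambda)$ variables of $A^\lambda$ are exactly those with $(\gamma A^{-1})_j\notin\Z$, and the remaining coordinates are $0$. Comparing with $s+r=\dim(\lambda)+2{\rm age}(\lambda)-2$ gives $|\alpha_i|=p^{(s_i+r_i)/2}$. The pure-$y$ generators with $\gamma=0$ need a separate check, since their weight comes from ${\rm age}(\lambda)$ alone.

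For part 2) I would use the involution $(\gamma,\lambda)\mapsto(-\gamma,-\lambda)$ on $G^T\times G$. This is Serre duality: it preserves $\delta$ and $\dim$, sends ${\rm age}(\lambda)$ to $\dim$-complement $n-\dim(\lambda)-{\rm age}(\lambda)$, and sends ${\rm age}^\vee(\gamma)$ to $n-\dim(\gamma)-{\rm age}^\vee(\gamma)$. The reflection formula $\Gamma_p(x)\Gamma_p(1-x)=\pm1$ (the sign is $(-1)^{\text{something determined by }x}$) then gives $\alpha(\gamma,\lambda)\alpha(-\gamma,-\lambda)=\pm p^{n-2}$. The degree identity from part 3 is what yields the exponent $n-2$. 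Substituting $t\mapsto 1/(p^{n-2}t)$ in the product formula and pairing factors $(1-\alpha_it)$ with $(1-\alpha_{i'}t)$ produces the functional equation, with prefactor $\pm p^{(n-2)\chi/2}t^{\chi}$. Here $\chi=\sum_i(-1)^{s_i+r_i}$ is the orbifold (Chen--Ruan) Euler characteristic, which equals that of any crepant resolution by Batyrev/Yasuda.

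The main obstacle is the bookkeeping in part 3 and part 2: verifying carefully that the Gross--Koblitz normalizations make the $(-p)^{{\rm age}^\vee}$ factor exactly absorb the Gauss-sum powers, and that the duality involution really preserves $\delta$. For the latter I would argue via the symmetric description of generators in \cite{AP}, or via the mirror symmetry $A\leftrightarrow A^T$ swapping roles of $\gamma$ and $\lambda$.
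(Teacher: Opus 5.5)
Your outline follows the paper's route: the product formula for rationality, Gross--Koblitz for absolute values, a duality pairing with the reflection formula for the functional equation, and Yasuda for $\chi$. However, the key step in part 3) rests on a false claim, and that error, together with an unresolved sign, also breaks part 2).

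\textbf{Part 3).} You claim that untwisted generators for $A^\lambda$ have all coordinates of $\gamma(A^\lambda)^{-1}$ non-integral, so that $n-\dim(\gamma)=\dim(\lambda)$. This holds for diagonal $A$ but fails for chain and loop matrices. In Example~\ref{ex:chain-3334} the untwisted generators $x_1^3x_2x_3x_4^3\,e_1e_2e_3e_4$ and $x_1^3x_2x_3^3x_4\,e_1e_2e_3e_4$ have $\gamma A^{-1}=(1,0,\tfrac{1}{3},\tfrac{2}{3})$ and $(1,0,1,0)$, and Example~\ref{ex:L2L2} has several more such generators. Your bookkeeping also treats every integral coordinate as $0$, whereas a coordinate equal to $1$ contributes $|(-p)\Gamma_p(1)|=p$, not $1$. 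For $\gamma A^{-1}=(1,0,1,0)$ with $\lambda=0$ and $n=4$, your formula gives $|\alpha|=p^{-1}$, while the actual eigenvalue is $p\,\Gamma_p(1)^2=p$ with $s+r=2$. The identity you actually need is the paper's \eqref{eq:3.6}, $|\Gamma_p(\gamma A^{-1})|=p^{\dim(\lambda)/2-{\rm age}^\vee(\gamma)}$. For entries in $[0,1]$, this says that the number of non-integral entries plus twice the number of entries equal to $1$ equals $\dim(\lambda)$. That statement, not $n-\dim(\gamma)=\dim(\lambda)$, is what yields $|\alpha_i|=p^{(s_i+r_i)/2}$.

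\textbf{Part 2).} The eigenvalue $\alpha$ depends on the monomial representative, not only on the class of $\gamma$: replacing $\gamma$ by $\gamma+e_iA$ where $(\gamma A^{-1})_i=0$ multiplies $\alpha$ by $p$. So the involution must be defined on generators, not on $G^T\times G$. The paper pairs $\gamma$ with its residue-pairing partner $JA^\lambda_0-\gamma$ in the Milnor ring of $W_{A^\lambda}$. This partner has dual age $\dim(\lambda)-{\rm age}^\vee(\gamma)$, since entries $1$ and $0$ get swapped. Your formula $n-\dim(\gamma)-{\rm age}^\vee(\gamma)$ would give $-2$ for $\gamma A^{-1}=(1,0,1,0)$. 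With the correct partner and your ${\rm age}(-\lambda)=n-\dim(\lambda)-{\rm age}(\lambda)$, you recover exactly the paper's expression for $\alpha_{i'}$, and the power $p^{n-2}$ comes out directly, without invoking part 3).

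Moreover, $\alpha_i\alpha_{i'}=\pm p^{n-2}$ is not enough. A sign per pair cannot be absorbed into the global $\pm$, because $\alpha_i\alpha_{i'}=-p^{n-2}$ turns $1-p^{n-2}t/\alpha_i$ into $1+\alpha_{i'}t$ rather than $1-\alpha_{i'}t$. You have to compute the sign, as the paper does. The reflection formula gives $\Gamma_p(\gamma A^{-1})\Gamma_p((JA^\lambda_0-\gamma)A^{-1})=(-1)^{p\dim(\lambda)-(p-1){\rm age}^\vee(\gamma)}$. Combined with the factor $(-1)^{\dim(\lambda)}$ from the powers of $-p$ and the oddness of $p$, this gives exactly $\alpha_i\alpha_{i'}=p^{n-2}$.
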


\begin{proof}

If we let $\alpha_i$ be as in \eqref{eq:supertrace-sr}, then using the identity $\exp\!\left(-\sum_{\nu\ge 1}\frac{(\alpha t)^\nu}{\nu}\right)=1-\alpha t$, we obtain the factorization
\begin{equation}\label{eq:zeta-factored}
\zeta_p(A,G,t) = \prod_i (1-\alpha_i\,t)^{(-1)^{s_i+r_i+1}} = \frac{\displaystyle\prod_{s_i+r_i\text{ odd}}(1-\alpha_i\,t)}{\displaystyle\prod_{s_i+r_i\text{ even}}(1-\alpha_i\,t)}=\prod_k P_k(t)^{(-1)^{k+1}}\,.
\end{equation}
where $P_k(t)=\prod_{s_i+r_i=k}(1-\alpha_i\,t)$ keeps track of all terms corresponding to total degree $k$. This proves 1).

Recall from \cite[Remark~2.18]{AP2} that each $\alpha_i = \alpha(\gamma,\lambda)$ sits in total Hodge degree $s + r$, where $s = {\rm age}(\lambda)+{\rm age}^\vee(\gamma)-1$ and $r = \dim(\lambda)+{\rm age}(\lambda)-{\rm age}^\vee(\gamma)-1$. It follows from the Gross-Koblitz formula (see e.g.\ \cite{Koblitz84}) that $\Gamma_p(a/(p-1))$ can be thought of as an algebraic number and
\begin{equation}
\left|\Gamma_p\left(\frac{a}{p-1}\right)\right|=p^{\frac{1}{2}-\frac{a}{p-1}}
\end{equation}
for any integer $0<a<p-1$. Hence
\begin{equation}\label{eq:3.6}
\left|\Gamma_p(\gamma A^{-1})\right|=p^{\frac{\dim(\lambda)}{2}-{\rm age}^\vee(\gamma)}
\end{equation}
and, upon substitution of \eqref{eq:3.6} into \eqref{eq:alpha}, we obtain 3).

It remains to prove 2). As shown in \cite{AP2}, the bigraded components of the cohomology of $(\mathcal B^G_A,D_A)$ are isomorphic to those of the corresponding components of the Chen-Ruan cohomology of $Y_{A,G}(\mathbb C)$, and thus, by the proof of the crepant resolution conjecture \cite{Yasuda04}, to those of any crepant resolution $Z_{A,G}(\mathbb C)$ of $Y_{A,G}(\mathbb C)$. We then have that $\chi = \sum_i (-1)^{s_i + r_i}$, and by factoring out $-\alpha_i / (p^{n-2} t)$ from each term in the expression for $\zeta_p(A, G, 1/(p^{n-2}t))$, we arrive at:
\begin{equation}\label{eq:3.5}
\zeta_p(A,G,1/(p^{n - 2}t))= \pm p^{(n-2)\chi} t^\chi \prod_{i} \alpha_i^{(-1)^{s_i+r_i+1}} \prod_i \left(1-\frac{p^{n-2}t}{\alpha_i}\right)^{(-1)^{s_i+r_i+1}}\,.
\end{equation}

Furthermore, we know from \cite{AP} that the cohomology of $(\mathcal B_A^G,D_A)$ breaks down as the direct sum of components labeled by $\lambda\in G$, with each such component isomorphic to the $G$-invariant part of the Milnor ring of the potentials $W_{A^\lambda}$. The non-degenerate residue pairing on the Milnor ring of $W_{A^\lambda}$ pairs  $\gamma$ with $J A^\lambda_0-\gamma$ where $A^\lambda_0$ denotes the $n\times n$ matrix obtained by first restricting to $A^\lambda$ and then extending back by setting all other entries to $0$. Moreover, $\gamma$ is in the $G$-invariant part (i.e., it has integer dual age) if and only if $J A^\lambda_0-\gamma$ is also. Hence $\delta(\gamma,\lambda)=1$ if and only if $\delta(J A^\lambda_0-\gamma,\lambda)=1$. Reversing the role of $\gamma$ and $\lambda$ (which geometrically corresponds to the perspective of the mirror Calabi--Yau orbifold $Y_{A^T,G^T}$) and repeating the same argument, we conclude that $\delta(J A^\lambda_0-\gamma,J (A^T)^\gamma_0-\lambda)=1$ if and only if $\delta(\gamma,\lambda)=1$.

Using the shorthand $\alpha_i = \alpha(\gamma, \lambda)$ and $\alpha_{i'} = \alpha(J A^\lambda_0-\gamma, J(A^T)^\gamma_0-\lambda)$, we now show that
\begin{equation}\label{eq:weil-alpha-alpha-prime}
\alpha_i \alpha_{i'} = \alpha(\gamma,\lambda)\alpha(J A^\lambda_0-\gamma, J(A^T)^\gamma_0-\lambda)=p^{n-2}\,.
\end{equation}
Since multiplying $JA_0^\lambda$ by $A^{-1}$ produces an indicator vector for the $\lambda$ dimensions, we have that ${\rm age}^\vee(J A^\lambda_0) = \dim(\lambda)$, and symmetrically ${\rm age}(J(A^T)^\gamma_0) = \dim(\gamma)$. Therefore,
\begin{equation*}
{\rm age}(J (A^T)^\gamma_0)+{\rm age}^\vee (J A^\lambda_0)=\dim(\gamma)+\dim(\lambda)=n\,,
\end{equation*}
which used with \eqref{eq:alpha} produces
\begin{equation}
\alpha_{i'}= p^{n-1-{\rm age}(\lambda)-{\rm age}^\vee(\gamma)}(-1)^{\dim(\lambda)-{\rm age}^\vee(\gamma)}\Gamma_p((J A^\lambda_0-\gamma)A^{-1})\,.
\end{equation}
Together with the functional equation
\begin{equation}
\Gamma_p(\gamma A^{-1})\Gamma_p((J A^\lambda_0-\gamma)A^{-1})=(-1)^{p\dim(\lambda)-(p-1){\rm age}^\vee(\gamma)}
\end{equation}
and the fact that our assumptions on $A$ imply that $p$ is odd, this yields \eqref{eq:weil-alpha-alpha-prime}.

Since $\dim(\lambda)=\dim(J (A^T)^\gamma_0-\lambda)$, we see that $(\gamma,\lambda)$ and $(J A^\lambda_0-\gamma, J (A^T)^\gamma_0-\lambda)$ have the same total Hodge degree. Combined with \eqref{eq:weil-alpha-alpha-prime} this shows that
\begin{equation}\label{eq:weil-last-term}
 \prod_i \left(1-\frac{p^{n-2}t}{\alpha_i}\right)^{(-1)^{s_i+r_i+1}}
 = \prod_{i'} (1 - \alpha_{i'} t)^{(-1)^{s_{i'} + r_{i'} + 1}}
 = \zeta_p(A,G,t)\,.
\end{equation}
Substituting \eqref{eq:weil-last-term} into \eqref{eq:3.5} and observing that, by 3),
\begin{equation}
\left|\prod_i \alpha_i^{(-1)^{s_i+r_i+1}}\right|=p^{-\frac{(n-2)\chi}{2}}\,,
\end{equation}
we arrive at
\begin{equation}\label{eq:weil-middle-term}
\zeta_p(A,G,1/(p^{n-2}t))= \pm p^{\frac{(n-2)\chi}{2}} t^\chi C \zeta_p(A,G,t)
\end{equation}
for some $C$ such that $|C|=1$. However, applying \eqref{eq:weil-middle-term} twice straightforwardly implies $C^2=1$, from which 2) easily follows.
\end{proof}

\begin{remark}
Theorem \ref{thm:3.6} only establishes that $\zeta_p(A,G,t)$ is a rational function with coefficients in $\mathbb Q_p$. However, Conjecture~\ref{conj:supertrace-counts-points} implies the stronger statement that $\zeta_p(A,G,t)$ is a rational function with coefficients in $\mathbb Q$.
\end{remark}

\section{K3 Surfaces}\label{sec:k3-examples}

Up to isomorphism, there are exactly $14$ families of weighted diagonal surfaces whose minimal resolutions are K3 surfaces. In \cite{Goto}, these families are determined by their degree $m$, weights $Q = (q_0, q_1, q_2, q_3)$, and a ``twist'' $c = (c_0, c_1, c_2, c_3)$ corresponding to the monomial coefficients of the polynomial. A rescaling of the coordinates reduces the polynomial to the untwisted case $c = (1, 1, 1, 1)$, which is the setting of our paper. In our framework, we view each untwisted surface as being associated with the Berglund-H\"{u}bsch matrix $A = \mathrm{diag}(m/q_0, \ldots, m/q_3)$ and symmetry group $G = \langle J \rangle$. The resulting K3 surface is the minimal resolution of the variety cut out by the potential
\[
x_0^{m/q_0} + x_1^{m/q_1} + x_2^{m/q_2} + x_3^{m/q_3} = 0\,
\]
in the weighted projective space $\mathbb{P}^3(Q)$.

All but two of these K3s (the Fermat quartic and $x_0^2+x_1^6+x_2^6+x_3^6=0$) possess isolated cyclic quotient singularities at loci $\mathcal{P}_{ij}$ consisting of points where exactly two coordinates are non-zero and $\gcd(q_i, q_j) \geq 2$. Goto \cite[Theorem~5.2]{Goto} explicitly computed the minimal resolutions and the corresponding zeta functions in all 14 cases.
In the untwisted case, there are only two such K3s where the minimal field of definition of the singularities can strictly require the quadratic extension $\F_{p^2}$: the surface $x_0^2 + x_1^3 + x_2^{10} + x_3^{15} = 0$ considered in Example~\ref{ex:30-15-10-3-2}, and $x_0^3 + x_1^4 + x_2^4 + x_3^6 = 0$. However, the standing assumption $\det(A) \mid (p-1)$ implies that $(p - 1) / \gcd(m, p - 1)$ is even. Thus, $-1$ is an $m$-th power in $\F_p^\times$ and, by \cite[Section~4]{Goto}, this implies all singularities are defined over $\F_p$.

As observed in Remark \ref{rem:section4-generalization}, Conjecture \ref{conj:supertrace-counts-points} holds for weighted diagonal K3 surfaces when $G=\langle J\rangle$. Here we use the code \cite{P} to explicitly verify Conjecture~\ref{conj:supertrace-counts-points} for these $14$ untwisted surfaces over various primes. Table~\ref{tab:k3-zeta} lists the middle cohomology polynomial $P_2(t)$ of the zeta function computed over $\F_p$ for each surface at a prime $p$ satisfying $\det(A) \mid (p-1)$. For these K3 surfaces, the remaining factors are always $P_0(t) = 1-t$ and $P_4(t) = 1-p^2t$, and there is no odd-degree cohomology. In each case, $\zeta_p(A,\langle J\rangle,t)$ agrees exactly with the zeta function derived in \cite{Goto}. Example~\ref{ex:30-15-10-3-2} works through one case in detail.

{\small
\setlength{\LTpre}{0.5em}
\setlength{\LTpost}{0.5em}
\renewcommand{\arraystretch}{1.15}
\begin{longtable}{@{}rlr@{}}
\caption{Middle cohomology polynomials $P_2(t)$ for the $14$ diagonal K3 surfaces over $\F_p$, calculated using \cite{P}.}\label{tab:k3-zeta}\\
\toprule
$m$ & Equation & $p$ \\
\midrule
\endfirsthead
\toprule
$m$ & Equation & $p$ \\
\midrule
\endhead
\midrule
\multicolumn{3}{r@{}}{\footnotesize\itshape continued on next page}\\
\endfoot
\bottomrule
\endlastfoot
$42$ & $x_0^2+x_1^3+x_2^7+x_3^{42}=0$ in $\mathbb P^3(21,14,6,1)$ & $3529$ \\
\addlinespace[4pt]
\multicolumn{3}{c}{$\begin{aligned}[t]
P_2(t) ={} & (1-pt)^{10} \cdot\bigl(1-4675t+13699pt^2-16339p^2t^3+26774p^3t^4-25606p^4t^5 \\
& \qquad {}+33125p^5t^6-25606p^6t^7+26774p^7t^8-16339p^8t^9+13699p^9t^{10}-4675p^{10}t^{11}+p^{12}t^{12}\bigr)
\end{aligned}$} \\
\midrule
$30$ & $x_0^2+x_1^3+x_2^{10}+x_3^{15}=0$ in $\mathbb P^3(15,10,3,2)$ & $1801$ \\
\addlinespace[4pt]
\multicolumn{3}{c}{$\begin{aligned}[t]
P_2(t) ={} & (1-pt)^{14} \cdot\bigl(1-1873t+4068pt^2-5981p^2t^3+4595p^3t^4 \\
& \qquad {}-5981p^4t^5+4068p^5t^6-1873p^6t^7+p^8t^8\bigr)
\end{aligned}$} \\
\midrule
$24$ & $x_0^2+x_1^3+x_2^8+x_3^{24}=0$ in $\mathbb P^3(12,8,3,1)$ & $1153$ \\
\addlinespace[4pt]
\multicolumn{3}{c}{$\begin{aligned}[t]
P_2(t) ={} & (1-pt)^{10}(1+p^2t^2)^2 \cdot\bigl(1-2912t+2080pt^2+2464p^2t^3-5246p^3t^4 \\
& \qquad {}+2464p^4t^5+2080p^5t^6-2912p^6t^7+p^8t^8\bigr)
\end{aligned}$} \\
\midrule
$20$ & $x_0^2+x_1^4+x_2^5+x_3^{20}=0$ in $\mathbb P^3(10,5,4,1)$ & $1601$ \\
\addlinespace[4pt]
\multicolumn{3}{c}{$\begin{aligned}[t]
P_2(t) ={} & (1-pt)^{14} \cdot\bigl(1+6232t+13148pt^2+19304p^2t^3+21830p^3t^4 \\
& \qquad {}+19304p^4t^5+13148p^5t^6+6232p^6t^7+p^8t^8\bigr)
\end{aligned}$} \\
\midrule
$18$ & $x_0^2+x_1^3+x_2^9+x_3^{18}=0$ in $\mathbb P^3(9,6,2,1)$ & $2917$ \\
\addlinespace[4pt]
\multicolumn{3}{c}{$P_2(t) = (1-pt)^{16}(1+4530t+987pt^2-1316p^2t^3+987p^3t^4+4530p^4t^5+p^6t^6)$} \\
\midrule
$12$ & $x_0^2+x_1^3+x_2^{12}+x_3^{12}=0$ in $\mathbb P^3(6,4,1,1)$ & $2593$ \\
\addlinespace[4pt]
\multicolumn{3}{c}{$P_2(t) = (1-pt)^6(1+pt)^8(1+pt+p^2t^2)^2(1+850t-3405pt^2+850p^2t^3+p^4t^4)$} \\
\midrule
$12$ & $x_0^2+x_1^4+x_2^6+x_3^{12}=0$ in $\mathbb P^3(6,3,2,1)$ & $577$ \\
\addlinespace[4pt]
\multicolumn{3}{c}{$P_2(t) = (1-pt)^{18}(1+92t+966pt^2+92p^2t^3+p^4t^4)$} \\
\midrule
$12$ & $x_0^3+x_1^3+x_2^4+x_3^{12}=0$ in $\mathbb P^3(4,4,3,1)$ & $433$ \\
\addlinespace[4pt]
\multicolumn{3}{c}{$P_2(t) = (1-pt)^{18}(1+68t+294pt^2+68p^2t^3+p^4t^4)$} \\
\midrule
$12$ & $x_0^3+x_1^4+x_2^4+x_3^6=0$ in $\mathbb P^3(4,3,3,2)$ & $577$ \\
\addlinespace[4pt]
\multicolumn{3}{c}{$P_2(t) = (1-pt)^{18}(1+92t+966pt^2+92p^2t^3+p^4t^4)$} \\
\midrule
$10$ & $x_0^2+x_1^5+x_2^5+x_3^{10}=0$ in $\mathbb P^3(5,2,2,1)$ & $3001$ \\
\addlinespace[4pt]
\multicolumn{3}{c}{$P_2(t) = (1-pt)^{10}(1-9799t+14001pt^2-9799p^2t^3+p^4t^4)(1+pt+p^2t^2+p^3t^3+p^4t^4)^2$} \\
\midrule
$8$ & $x_0^2+x_1^4+x_2^8+x_3^8=0$ in $\mathbb P^3(4,2,1,1)$ & $7681$ \\
\addlinespace[4pt]
\multicolumn{3}{c}{$P_2(t) = (1+pt)^4(1-pt)^{14}(1-4300t-5466pt^2-4300p^2t^3+p^4t^4)$} \\
\midrule
$6$ & $x_0^3+x_1^3+x_2^6+x_3^6=0$ in $\mathbb P^3(2,2,1,1)$ & $1297$ \\
\addlinespace[4pt]
\multicolumn{3}{c}{$P_2(t) = (1-pt)^{16}(1+478t+p^2t^2)(1+pt+p^2t^2)^2$} \\
\midrule
$6$ & $x_0^2+x_1^6+x_2^6+x_3^6=0$ in $\mathbb P^3(3,1,1,1)$ & $433$ \\
\addlinespace[4pt]
\multicolumn{3}{c}{$P_2(t) = (1-pt)^{20}(1+862t+p^2t^2)$} \\
\midrule
$4$ & $x_0^4+x_1^4+x_2^4+x_3^4=0$ in $\mathbb P^3(1,1,1,1)$ & $257$ \\
\addlinespace[4pt]
\multicolumn{3}{c}{$P_2(t) = (1-pt)^{20}(1+510t+p^2t^2)$} \\
\end{longtable}
\renewcommand{\arraystretch}{1.0}
}

\begin{example}\label{ex:30-15-10-3-2}
Consider the Berglund--H\"ubsch matrix $A = \mathrm{diag}(2, 3, 10, 15)$ over the finite field $k = \F_p$ so that $\det(A)=900$, $m=30$. As symmetry group we choose $G=\langle J\rangle$. Denote by $\widetilde{X}_{\F_{p^\nu}}$ the minimal resolution of $W_A(x) = 0$ defined over the field $\F_{p^\nu}$. The singularities for this surface are summarized in Table~\ref{tab:sing-m30}.

It follows from \cite[Theorem~5.2]{Goto} that the middle cohomology polynomial of $\widetilde{X}_{\overline{\F}_{p}}$ can be expressed as
\begin{equation}
P_2(\widetilde{X}_{\overline{\F}_{p}}, t) =
(1 - p t) \,
\prod_{\mathbf{a} \in \mathfrak{A}} (1 - \mathcal{J}(\mathbf{a})\,t)
\prod_{(i,j) \in \mathscr{I}_0} \bigl\{(1 - p t)\cdots(1-(\eta_{ij}^{f_{ij}-1}p) t)\bigr\}^{r_{ij}\omega_{ij}} \,, \label{eq:goto-p2}
\end{equation}
where the notation follows \cite[Sections~4--5]{Goto}. The set $\mathfrak{A}$ consists of tuples $\mathbf{a} = (a_0, a_1, a_2, a_3)$ with each $a_i$ a nonzero multiple of~$q_i$ modulo~$m$ satisfying $\sum a_i \equiv 0 \pmod{m}$, and $\mathcal{J}(\mathbf{a}) = \mathcal{J}(\mathbf{1}, \mathbf{a})$ denotes the corresponding (untwisted) Jacobi sum. The set $\mathscr{I}_0 = \{(i,j) \mid 0 \le i < j \le 3,\; d_{ij} \ge 2\}$ indexes singular coordinate loci~$\mathcal{P}_{ij}$ from Table~\ref{tab:sing-m30}, where $d_{ij} = \gcd(q_i, q_j)$. Each point of $\mathcal{P}_{ij}$ is a cyclic quotient singularity of type $A_{d_{ij}, \alpha_{ij}}$, where $\alpha_{ij}$ is the unique integer in $[1, d_{ij})$ satisfying $q_{i_*}\alpha_{ij} \equiv q_{j_*} \pmod{d_{ij}}$ for $\{i_*, j_*\} = \{0,1,2,3\} \setminus \{i,j\}$. The exponent $r_{ij}$ is the number of exceptional curves in the minimal resolution, equal to the length of the Hirzebruch--Jung continued fraction of $d_{ij}/\alpha_{ij}$. The parameter $\omega_{ij} = m / (e_{ij} f_{ij})$ counts Galois orbits in $\mathcal{P}_{ij}$, where $e_{ij} = \operatorname{lcm}(q_i, q_j)$ and $f_{ij}$ is the degree of the field of definition over~$\F_p$. Finally, $\eta_{ij}$ is a primitive $f_{ij}$-th root of unity. Under the standing assumption $\det(A) \mid (p-1)$, we have $f_{ij} = 1$ for all $(i,j)$, so the singularity product simplifies to $(1-pt)^{r_{ij}\omega_{ij}}$. The Jacobi sums are listed in Table~\ref{tab:jacobi-m30}. Substituting into~\eqref{eq:goto-p2} yields the middle cohomology polynomial already listed in Table~\ref{tab:k3-zeta} in the row determined by $m = 30$.

\begin{table}[ht]
\centering
\begin{tabular}{ccccc}
\toprule
Locus $\mathcal{P}_{ij}$ & Type & Multiplicity & Exceptional curves ($r_{ij}$) & Sing.\ contribution ($\nu{=}1$) \\
\midrule
$\mathcal{P}_{01}$ & $A_{5,4}$ & 1 & 4 & $(1 - pt)^{4}$ \\
$\mathcal{P}_{02}$ & $A_{3,2}$ & 2 & 2 & $(1 - pt)^{4}$ \\
$\mathcal{P}_{13}$ & $A_{2,1}$ & 3 & 1 & $(1 - pt)^{3}$ \\
\bottomrule
\end{tabular}
\caption{Singularities of the diagonal K3 surface with $m = 30$ and $Q = (15, 10, 3, 2)$; see \cite[Lemma~4.5 and Proposition~4.6]{Goto}.}
\label{tab:sing-m30}
\end{table}

\begin{table}[ht]
\centering
\caption{Jacobi sums $\mathcal{J}(\mathbf{a})$ for $m=30$, $Q=(15,10,3,2)$ at $p=1801$.}\label{tab:jacobi-m30}
\renewcommand{\arraystretch}{1.2}
\begin{tabular}{@{}c@{\qquad}c@{}}
\begin{tabular}[t]{@{}lc@{}}
\toprule
$\mathbf{a}$ & $\mathcal{J}(\mathbf{a})$ \\
\midrule
$(\tfrac{1}{2}, \tfrac{1}{3}, \tfrac{1}{10}, \tfrac{1}{15})$ & $1775-307i$ \\
$(\tfrac{1}{2}, \tfrac{1}{3}, \tfrac{3}{10}, \tfrac{13}{15})$ & $-215+1788i$ \\
$(\tfrac{1}{2}, \tfrac{1}{3}, \tfrac{1}{2}, \tfrac{2}{3})$ & $p$ \\
$(\tfrac{1}{2}, \tfrac{1}{3}, \tfrac{7}{10}, \tfrac{7}{15})$ & $-706-1657i$ \\
$(\tfrac{1}{2}, \tfrac{1}{3}, \tfrac{9}{10}, \tfrac{4}{15})$ & $83+1799i$ \\
\bottomrule
\end{tabular}
&
\begin{tabular}[t]{@{}lc@{}}
\toprule
$\mathbf{a}$ & $\mathcal{J}(\mathbf{a})$ \\
\midrule
$(\tfrac{1}{2}, \tfrac{2}{3}, \tfrac{1}{10}, \tfrac{11}{15})$ & $83-1799i$ \\
$(\tfrac{1}{2}, \tfrac{2}{3}, \tfrac{3}{10}, \tfrac{8}{15})$ & $-706+1657i$ \\
$(\tfrac{1}{2}, \tfrac{2}{3}, \tfrac{1}{2}, \tfrac{1}{3})$ & $p$ \\
$(\tfrac{1}{2}, \tfrac{2}{3}, \tfrac{7}{10}, \tfrac{2}{15})$ & $-215-1788i$ \\
$(\tfrac{1}{2}, \tfrac{2}{3}, \tfrac{9}{10}, \tfrac{14}{15})$ & $1775+307i$ \\
\bottomrule
\end{tabular}
\end{tabular}
\renewcommand{\arraystretch}{1.0}
\end{table}

The orbifold cohomology has $24$ basis elements with a Hodge diamond of $h^{0,0}=h^{0,2}=h^{2,0}=h^{2,2}=1$ and $h^{1,1}=20$.
Since the cohomology is concentrated in even degrees, the supertrace signs are all positive, yielding $\alpha_i = \mathrm{tc}_i$. Table~\ref{tab:eigenvalue-comparison} lists all $24$ eigenvalues. Note that while some of the individual eigenvalues lie in $\Q_p \setminus \Q$, substituting them into \eqref{eq:zeta-factored} yields a zeta function with coefficients in $\Z$ that matches the one calculated using Goto's framework.

\begin{table}[ht]
\centering
\caption{Twisted Frobenius eigenvalues for $x_0^2 + x_1^3 + x_2^{10} + x_3^{15} = 0$.}\label{tab:eigenvalue-comparison}
\renewcommand{\arraystretch}{1.1}
{\scriptsize
\begin{tabular}{@{}lllc@{}}
\toprule
$H^{s,r}$ & Element & $\alpha_i = \mathrm{tc}_i$ & $\mathrm{tc}_i$ at $p=1801$ \\
\midrule
$H^{0,0}$ & $y_1 y_2 y_3 y_4$ & $1$ & $1$ \\
\midrule
$H^{0,2}$ & $x_1 x_2 x_3 x_4\,e_1 e_2 e_3 e_4$ & $-\Gamma_p(\tfrac{1}{2})\Gamma_p(\tfrac{1}{3})\Gamma_p(\tfrac{1}{10})\Gamma_p(\tfrac{1}{15})$ & $72{+}845{\cdot}p{+}1550{\cdot}p^{2}{+}721{\cdot}p^{3}{+}\cdots$ \\
\midrule
$H^{1,1}$ & $y_1 y_2 y_3^{3} y_4^{13}$ & $p$ & $p$ \\
 & $y_1 y_2 y_3^{5} y_4^{10}$ & $p$ & $p$ \\
 & $y_1 y_2 y_3^{7} y_4^{7}$ & $p$ & $p$ \\
 & $y_1 y_2 y_3^{9} y_4^{4}$ & $p$ & $p$ \\
 & $y_1 y_2^{2} y_3 y_4^{11}$ & $p$ & $p$ \\
 & $y_1 y_2^{2} y_3^{3} y_4^{8}$ & $p$ & $p$ \\
 & $y_1 y_2^{2} y_3^{5} y_4^{5}$ & $p$ & $p$ \\
 & $y_1 y_2^{2} y_3^{7} y_4^{2}$ & $p$ & $p$ \\
 & $x_2 x_4^{10} y_1 y_3^{5}\,e_2 e_4$ & $-p\,\Gamma_p(\tfrac{1}{3})\Gamma_p(\tfrac{2}{3})$ & $p$ \\
 & $x_2^{2} x_4^{5} y_1 y_3^{5}\,e_2 e_4$ & $-p\,\Gamma_p(\tfrac{2}{3})\Gamma_p(\tfrac{1}{3})$ & $p$ \\
 & $x_1 x_3^{5} y_2 y_4^{10}\,e_1 e_3$ & $-p\,\Gamma_p(\tfrac{1}{2})^2$ & $p$ \\
 & $x_1 x_3^{5} y_2^{2} y_4^{5}\,e_1 e_3$ & $-p\,\Gamma_p(\tfrac{1}{2})^2$ & $p$ \\
 & $x_1 x_2 x_3^{3} x_4^{13}\,e_1 e_2 e_3 e_4$ & $p\,\Gamma_p(\tfrac{1}{2})\Gamma_p(\tfrac{1}{3})\Gamma_p(\tfrac{3}{10})\Gamma_p(\tfrac{13}{15})$ & $1416{\cdot}p{+}391{\cdot}p^{2}{+}37{\cdot}p^{3}{+}\cdots$ \\
 & $x_1 x_2 x_3^{5} x_4^{10}\,e_1 e_2 e_3 e_4$ & $p\,\Gamma_p(\tfrac{1}{2})\Gamma_p(\tfrac{1}{3})\Gamma_p(\tfrac{1}{2})\Gamma_p(\tfrac{2}{3})$ & $p$ \\
 & $x_1 x_2 x_3^{7} x_4^{7}\,e_1 e_2 e_3 e_4$ & $p\,\Gamma_p(\tfrac{1}{2})\Gamma_p(\tfrac{1}{3})\Gamma_p(\tfrac{7}{10})\Gamma_p(\tfrac{7}{15})$ & $1081{\cdot}p{+}438{\cdot}p^{2}{+}783{\cdot}p^{3}{+}\cdots$ \\
 & $x_1 x_2 x_3^{9} x_4^{4}\,e_1 e_2 e_3 e_4$ & $p\,\Gamma_p(\tfrac{1}{2})\Gamma_p(\tfrac{1}{3})\Gamma_p(\tfrac{9}{10})\Gamma_p(\tfrac{4}{15})$ & $409{\cdot}p{+}1590{\cdot}p^{2}{+}1437{\cdot}p^{3}{+}\cdots$ \\
 & $x_1 x_2^{2} x_3 x_4^{11}\,e_1 e_2 e_3 e_4$ & $p\,\Gamma_p(\tfrac{1}{2})\Gamma_p(\tfrac{2}{3})\Gamma_p(\tfrac{1}{10})\Gamma_p(\tfrac{11}{15})$ & $1026{\cdot}p{+}1383{\cdot}p^{2}{+}580{\cdot}p^{3}{+}\cdots$ \\
 & $x_1 x_2^{2} x_3^{3} x_4^{8}\,e_1 e_2 e_3 e_4$ & $p\,\Gamma_p(\tfrac{1}{2})\Gamma_p(\tfrac{2}{3})\Gamma_p(\tfrac{3}{10})\Gamma_p(\tfrac{8}{15})$ & $903{\cdot}p{+}410{\cdot}p^{2}{+}194{\cdot}p^{3}{+}\cdots$ \\
 & $x_1 x_2^{2} x_3^{5} x_4^{5}\,e_1 e_2 e_3 e_4$ & $p\,\Gamma_p(\tfrac{1}{2})\Gamma_p(\tfrac{2}{3})\Gamma_p(\tfrac{1}{2})\Gamma_p(\tfrac{1}{3})$ & $p$ \\
 & $x_1 x_2^{2} x_3^{7} x_4^{2}\,e_1 e_2 e_3 e_4$ & $p\,\Gamma_p(\tfrac{1}{2})\Gamma_p(\tfrac{2}{3})\Gamma_p(\tfrac{7}{10})\Gamma_p(\tfrac{2}{15})$ & $1525{\cdot}p{+}1463{\cdot}p^{2}{+}303{\cdot}p^{3}{+}\cdots$ \\
\midrule
$H^{2,0}$ & $x_1 x_2^{2} x_3^{9} x_4^{14}\,e_1 e_2 e_3 e_4$ & $-p^2\,\Gamma_p(\tfrac{1}{2})\Gamma_p(\tfrac{2}{3})\Gamma_p(\tfrac{9}{10})\Gamma_p(\tfrac{14}{15})$ & $1776{\cdot}p^{2}{+}1343{\cdot}p^{3}{+}473{\cdot}p^{4}{+}\cdots$ \\
\midrule
$H^{2,2}$ & $y_1 y_2^{2} y_3^{9} y_4^{14}$ & $p^2$ & $p^2$ \\
\bottomrule
\end{tabular}
}
\renewcommand{\arraystretch}{1.0}
\end{table}
\end{example}

Next, we examine our conjecture for non-diagonal K3 surfaces. Goto computed zeta functions for so-called \emph{deformed diagonal} K3 surfaces, which are defined as the minimal resolutions of surfaces defined by equations of the form
\[
x_0^{a_0} + x_1^{a_1} + x_2^{a_2} + x_0 x_3^{a_3 - 1} = 0 \quad\text{in}\;\mathbb P^3(q_0, q_1, q_2, q_3)
\]
where $a_i = m/q_i$ and $\sum q_i = m$. All $85$ deformed diagonal surfaces are listed in \cite[Table~10]{GotoThesis}, along with a description of their singularities, their resolutions, and zeta functions in Theorem~4.3.4. We will examine a few examples with varying singularity structures.

\begin{example}\label{ex:deformed-diagonal-37}
Consider the deformed diagonal surface
\[
x_0^5 + x_1^{15} + x_2^3 + x_0 x_3^2 = 0 \quad\text{in}\;\mathbb P^3(3,1,5,6),
\]
with
\[
A = \begin{pmatrix} 5 & 0 & 0 & 0 \\ 0 & 15 & 0 & 0 \\ 0 & 0 & 3 & 0 \\ 1 & 0 & 0 & 2 \end{pmatrix},
\]
$\det(A) = 450$, $M = 30$, and $G = \langle J\rangle$. The singularities and corresponding contributions to the zeta function over $\F_{1801}, \F_{2251}$ and $\F_{2251^2}$ are summarized in Table~\ref{tab:sing-dd37}.

\begin{table}[ht]
\centering
\renewcommand{\arraystretch}{1.15}
{\small
\begin{tabular}{cccccccccccc}
\toprule
$p$ & $\nu$ & Locus & $m - q_0$ & $d_{ij}$ & $\alpha_{ij}$ & $e_{ij}$ & $m_{ij}$ & $f_{ij}$ & $\omega_{ij}$ & $r_{ij}$ & Contribution \\
\midrule
$1801$ & $1$ & $[0{:}0{:}0{:}1]$ & --- & --- & --- & --- & --- & --- & --- & $5$ & $(1-pt)^5$ \\
$1801$ & $1$ & $\mathcal{P}_{03}$ & $12$ & $3$ & $2$ & $6$ & $1$ & $1$ & $2$ & $2$ & $(1-pt)^4$ \\
\midrule
$2251$ & $1$ & $[0{:}0{:}0{:}1]$ & --- & --- & --- & --- & --- & --- & --- & $5$ & $(1-pt)^5$ \\
$2251$ & $1$ & $\mathcal{P}_{03}$ & $12$ & $3$ & $2$ & $6$ & $2$ & $2$ & $1$ & $2$ & $(1-pt)^2(1+pt)^2$ \\
\midrule
$2251$ & $2$ & $[0{:}0{:}0{:}1]$ & --- & --- & --- & --- & --- & --- & --- & $5$ & $(1-qt)^5$ \\
$2251$ & $2$ & $\mathcal{P}_{03}$ & $12$ & $3$ & $2$ & $6$ & $2$ & $2$ & $1$ & $2$ & $(1-qt)^4$ \\
\bottomrule
\end{tabular}
}
\renewcommand{\arraystretch}{1.0}
\caption{Singularities of the deformed diagonal surface with $Q=(3,1,5,6)$ and $m=15$. Shown are the parameters appearing in the contribution of the singularity to the zeta function, as described in \cite[Theorem~4.3.4]{GotoThesis}. Here $q = p^2$.}\label{tab:sing-dd37}
\end{table}

The singularity~$\mathcal{P}_{03}$ is defined only over $\F_{2251^2}$ and not $\F_{2251}$, which is reflected in the contribution to the zeta function when defining the surface over $\F_{2251^2}$ vs $\F_{2251}$. By \cite[Theorem~4.3.4]{GotoThesis}, there are two $\mathfrak{V}$-terms and $10$ $\mathfrak{W}$-terms ($8$ with Jacobi sums with values in $\C \setminus \R$ for both primes). Over $\F_{1801}$, the inverse of the zeta function is given by

\begin{multline*}
\zeta^{-1}(\widetilde{X}_{\overline{\F}_{1801}}, t) = (1-t) (1- p^2 t) (1 - pt)^{14} \\
{}\times\bigl(1 - 1873\,t + 4068\,p\,t^2 - 5981\,p^2 t^3 + 4595\,p^3 t^4 \\
 - 5981\,p^4 t^5 + 4068\,p^5 t^6 - 1873\,p^6 t^7 + p^8 t^8\bigr).
\end{multline*}

\noindent Over $\F_{2251}$:

\begin{multline*}
\zeta^{-1}(\widetilde{X}_{\overline{\F}_{2251}}, t) = (1-t) (1- p^2 t) (1 - pt)^{10}(1 + pt)^4 \\
{}\times\bigl(1 + 2323\,t - 732\,p\,t^2 + 1181\,p^2 t^3 + 4595\,p^3 t^4 \\
 + 1181\,p^4 t^5 - 732\,p^5 t^6 + 2323\,p^6 t^7 + p^8 t^8\bigr).
\end{multline*}

\noindent Over $\F_{2251^2}$, with $q = 2251^2$:

\begin{multline*}
\zeta^{-1}(\widetilde{X}_{\overline{\F}_{2251^2}}, t) = (1-t) (1 - qt)^{14} \\
{}\times\bigl(1 - 8691793\,t + 15735588\,q\,t^2 - 16904231\,q^2 t^3 + 18737495\,q^3 t^4 \\
 - 16904231\,q^4 t^5 + 15735588\,q^5 t^6 - 8691793\,q^6 t^7 + q^8 t^8\bigr) (1-q^2 t) .
\end{multline*}

The orbifold cohomology has $24$ basis elements with $h^{0,0}=h^{0,2}=h^{2,0}=h^{2,2}=1$ and $h^{1,1}=20$. Among the $20$ elements in $H^{1,1}$, for $\F_{1801}$ all $14$ rational eigenvalues equal~$p$, while for $\F_{2251}$, $10$ have eigenvalue~$p$ and $4$ have eigenvalue~$-p$; in both cases the remaining $6$ lie in $\Q_p\setminus\Q$. The eigenvalues from $H^{0,2}$ and $H^{2,0}$ are also not rational. Substituting into~\eqref{eq:zeta-factored} yields a zeta function matching the one derived from \cite[Theorem~4.3.4]{GotoThesis}, verifying Conjecture~\ref{conj:supertrace-counts-points} over $\F_{1801}$ and $\F_{2251}$.

Over $\F_{q}$ for $q = {2251}^2$, each eigenvalue $\alpha_i$ is replaced by $\alpha_i^2$. The $4$ eigenvalues $-p$ appearing over $\F_{p}$ satisfy $(-p)^2 = p^2 = q$, so they now contribute $(1 - qt)$ factors to the zeta function denominator. The Berglund--H\"ubsch zeta function again agrees with Goto's calculation, verifying Conjecture~\ref{conj:supertrace-counts-points} in this case.
\end{example}

\begin{example}\label{ex:deformed-diagonal-16}
Consider the deformed diagonal surface
\[
x_0^{10} + x_1^{10} + x_2^2 + x_0 x_3^3 = 0 \quad\text{in}\;\mathbb P^3(1,1,5,3),
\]
with
\[
A = \begin{pmatrix} 10 & 0 & 0 & 0 \\ 0 & 10 & 0 & 0 \\ 0 & 0 & 2 & 0 \\ 1 & 0 & 0 & 3 \end{pmatrix},
\]
$\det(A) = 600$, $M = 30$, and $G = \langle J\rangle$. This surface has an isolated singularity of type~$A_{3,2}$ at $[0{:}0{:}0{:}1]$ (with $q_3 = 3$, $\alpha_3 = 2$, $r_3 = 2$), contributing a factor of $(1 - pt)^2$ to~$P_2$.

Calculating the terms from \cite[Theorem~4.3.4]{GotoThesis}, we see that the remaining contributions consist of a $(1-pt)$ factor, one $\mathfrak{V}$-term with eigenvalue~$p$, and $18$ $\mathfrak{W}$-terms. Two of the $\mathfrak{W}$-terms have Jacobi sum~$p$, giving a total of $6$ $(1 - pt)$ factors in the denominator of the zeta function. The other $16$ $\mathfrak{W}$-terms have complex Jacobi sums. At $p = 601$, the middle cohomology polynomial factors as

\begin{multline*}
P_2(t) = (1 - pt)^6 (1 - pt + p^2 t^2)^4 \\
{}\times \bigl(1 + 1532\,t + 2698\,p\,t^2 + 3704\,p^2 t^3 + 3955\,p^3 t^4 \\
 + 3704\,p^4 t^5 + 2698\,p^5 t^6 + 1532\,p^6 t^7 + p^8 t^8\bigr).
\end{multline*}

The orbifold cohomology has $24$ basis elements with $h^{0,0}=h^{0,2}=h^{2,0}=h^{2,2}=1$ and $h^{1,1}=20$. Of the $20$ elements in $H^{1,1}$, $6$ have eigenvalue~$p$ and $14$ lie in $\Q_p\setminus\Q$. Together with the elements from $H^{0,2}$ and $H^{2,0}$, these $16$ non-rational eigenvalues produce the $(1 - pt + p^2 t^2)^4$ factor and the degree~$8$ polynomial. Substituting into~\eqref{eq:zeta-factored} yields a zeta function matching the one derived from \cite[Theorem~4.3.4]{GotoThesis}.
\end{example}

In the next example, we will examine a K3 surface that is neither diagonal, nor deformed-diagonal.

\begin{example}\label{ex:L2L2}
Consider the K3 surface defined by the $L_2 L_2$ potential
\[
W_A = x_0^3 x_1 + x_0 x_1^3 + x_2^3 x_3 + x_2 x_3^3 = 0 \quad\text{in}\;\mathbb P^3(1,1,1,1),
\]
where $A$ is the block diagonal matrix
\[
A = \begin{pmatrix} 3 & 1 & 0 & 0 \\ 1 & 3 & 0 & 0 \\ 0 & 0 & 3 & 1 \\ 0 & 0 & 1 & 3 \end{pmatrix}
\]
composed of two $L_2(3,3)$ loop blocks. Here $d=4$, $\det(A)=64$, and $G=\langle J\rangle$ has order~$4$.

 The orbifold cohomology has $24$ basis elements with the same Hodge diamond as the previous example: $h^{0,0}=h^{0,2}=h^{2,0}=h^{2,2}=1$ and $h^{1,1}=20$. All $20$ elements in $H^{1,1}$ have eigenvalue~$p$: each twisted element's $\Gamma_p$ product reduces to $\pm 1$ via complementary entries $z, 1 - z \in \gamma A^{-1}$ contributing $\Gamma_p(1 - z)\Gamma_p (z)= \pm 1$ to the eigenvalue.  The only nontrivial eigenvalues come from $H^{0,2}$ and $H^{2,0}$, which contribute a single irreducible factor of degree~$2$.

Table~\ref{tab:L2L2-point-counts} lists the point counts $N_p$ at the first five primes $p$ satisfying $\det(A) = 64 \mid p-1$, where $A$ is Berglund--H\"ubsch in the sense of Definition~\ref{def:berglund-hubsch}. In each case the orbifold supertrace and the brute-force projective enumeration agree; both are computed in \cite{P}.

\begin{table}[ht]
\centering
\caption{Point counts for the $L_2 L_2$ surface at primes with $64\mid p-1$.}\label{tab:L2L2-point-counts}
\begin{tabular}{@{}rr@{}}
\toprule
$p$ & $N_p$ \\
\midrule
$193$ & $40{,}920$ \\
$257$ & $70{,}680$ \\
$449$ & $209{,}880$ \\
$577$ & $343{,}320$ \\
$641$ & $424{,}920$ \\
\bottomrule
\end{tabular}
\end{table}

At $p = 193$, the smallest prime with $\det(A) = 64 \mid p-1$, the orbifold zeta function is
\[
\zeta_p^{-1}(A, G, t) = (1-t) (1-pt)^{20}\bigl(1 + 190\,t + p^2 t^2\bigr) (1 - p^2t)\,.
\]
We list all $24$ orbifold trace contributions in Table~\ref{tab:L2L2-traces}.
All $20$ eigenvalues in $H^{1,1}$ equal~$p$; the two eigenvalues from $H^{0,2}$ and $H^{2,0}$ contribute to the quadratic factor $1+190\,t+p^2 t^2$, with $193$-adic expansions starting with $\alpha_{0,2} = 3 + 192\cdot 193 + 63\cdot 193^2 + 21\cdot 193^3 + \cdots$ and $\alpha_{2,0} = 129\cdot 193^2 + 171\cdot 193^3 + 135\cdot 193^4 + 47\cdot 193^5 + \cdots$

As it happens, the conjecture also holds at the prime $p = 281$, even though $\det(A) \nmid 281 - 1$. In this case, we compute the zeta function as
\[
\zeta_p^{-1}(A, G, t) = (1-t) (1-pt)^{20}\bigl(1 + 462\,t + p^2 t^2\bigr) (1 - p^2t)\,,
\]
which matches the zeta function shown in \cite[Table~3]{Whitcher21} for $\psi = 0$.

\begin{table}[ht]
\centering
\caption{Orbifold trace contributions for the $L_2 L_2$ surface at $p=193$.}\label{tab:L2L2-traces}
\renewcommand{\arraystretch}{1.1}
{\footnotesize
\begin{tabular}{@{}llll@{}}
\toprule
$H^{s,r}$ & Element & $\gamma A^{-1}$ & $\alpha_i$ \\
\midrule
$H^{0,0}$ & $y_1 y_2 y_3 y_4$ & $(0,0,0,0)$ & $1$ \\
\midrule
$H^{0,2}$ & $x_1 x_2 x_3 x_4\, e_1 e_2 e_3 e_4$ & $(\tfrac14,\tfrac14,\tfrac14,\tfrac14)$ & $\alpha_{0,2}$ \\
\midrule
$H^{1,1}$ & $y_1^2 y_2^2 y_3^2 y_4^2$ & $(0,0,0,0)$ & $p$ \\
 & $x_1 x_2 x_3^3 x_4^3\, e_1 e_2 e_3 e_4$ & $(\tfrac14,\tfrac14,\tfrac34,\tfrac34)$ & $p$ \\
 & $x_1 x_2^2 x_3^2 x_4^3\, e_1 e_2 e_3 e_4$ & $(\tfrac18,\tfrac58,\tfrac38,\tfrac78)$ & $p$ \\
 & $x_1 x_2^2 x_3^3 x_4^2\, e_1 e_2 e_3 e_4$ & $(\tfrac18,\tfrac58,\tfrac78,\tfrac38)$ & $p$ \\
 & $x_1 x_2^3 x_3 x_4^3\, e_1 e_2 e_3 e_4$ & $(0,1,0,1)$ & $p$ \\
 & $x_1 x_2^3 x_3^2 x_4^2\, e_1 e_2 e_3 e_4$ & $(0,1,\tfrac12,\tfrac12)$ & $p$ \\
 & $x_1 x_2^3 x_3^3 x_4\, e_1 e_2 e_3 e_4$ & $(0,1,1,0)$ & $p$ \\
 & $x_1^2 x_2 x_3^2 x_4^3\, e_1 e_2 e_3 e_4$ & $(\tfrac58,\tfrac18,\tfrac38,\tfrac78)$ & $p$ \\
 & $x_1^2 x_2 x_3^3 x_4^2\, e_1 e_2 e_3 e_4$ & $(\tfrac58,\tfrac18,\tfrac78,\tfrac38)$ & $p$ \\
 & $x_1^2 x_2^2 x_3 x_4^3\, e_1 e_2 e_3 e_4$ & $(\tfrac12,\tfrac12,0,1)$ & $p$ \\
 & $x_1^2 x_2^2 x_3^2 x_4^2\, e_1 e_2 e_3 e_4$ & $(\tfrac12,\tfrac12,\tfrac12,\tfrac12)$ & $p$ \\
 & $x_1^2 x_2^2 x_3^3 x_4\, e_1 e_2 e_3 e_4$ & $(\tfrac12,\tfrac12,1,0)$ & $p$ \\
 & $x_1^2 x_2^3 x_3 x_4^2\, e_1 e_2 e_3 e_4$ & $(\tfrac38,\tfrac78,\tfrac18,\tfrac58)$ & $p$ \\
 & $x_1^2 x_2^3 x_3^2 x_4\, e_1 e_2 e_3 e_4$ & $(\tfrac38,\tfrac78,\tfrac58,\tfrac18)$ & $p$ \\
 & $x_1^3 x_2 x_3 x_4^3\, e_1 e_2 e_3 e_4$ & $(1,0,0,1)$ & $p$ \\
 & $x_1^3 x_2 x_3^2 x_4^2\, e_1 e_2 e_3 e_4$ & $(1,0,\tfrac12,\tfrac12)$ & $p$ \\
 & $x_1^3 x_2 x_3^3 x_4\, e_1 e_2 e_3 e_4$ & $(1,0,1,0)$ & $p$ \\
 & $x_1^3 x_2^2 x_3 x_4^2\, e_1 e_2 e_3 e_4$ & $(\tfrac78,\tfrac38,\tfrac18,\tfrac58)$ & $p$ \\
 & $x_1^3 x_2^2 x_3^2 x_4\, e_1 e_2 e_3 e_4$ & $(\tfrac78,\tfrac38,\tfrac58,\tfrac18)$ & $p$ \\
 & $x_1^3 x_2^3 x_3 x_4\, e_1 e_2 e_3 e_4$ & $(\tfrac34,\tfrac34,\tfrac14,\tfrac14)$ & $p$ \\
\midrule
$H^{2,0}$ & $x_1^3 x_2^3 x_3^3 x_4^3\, e_1 e_2 e_3 e_4$ & $(\tfrac34,\tfrac34,\tfrac34,\tfrac34)$ & $\alpha_{2,0}$ \\
\midrule
$H^{2,2}$ & $y_1^3 y_2^3 y_3^3 y_4^3$ & $(0,0,0,0)$ & $p^2$ \\
\bottomrule
\end{tabular}
}
\renewcommand{\arraystretch}{1.0}
\end{table}
\end{example}

\begin{example}\label{ex:chain-3334}
Let
\[
A = \mat{3 & 1 & 0 & 0 \\ 0 & 3 & 1 & 0 \\ 0 & 0 & 3 & 1 \\ 0 & 0 & 0 & 4}
\]
and $G=\langle J\rangle$. In particular, $X_A$ is a smooth (non-diagonal, non-deformed-diagonal) K3 surface in $\mathbb{P}^3$. The 24 generators of the orbifold cohomology and their contributions to $ST_p(A,\langle J\rangle)$ are shown in  Table~\ref{tab:chain-3334-traces}.

\begin{table}[ht]
\centering
\caption{Orbifold trace contributions for Chain$(3,3,3,4)$.}\label{tab:chain-3334-traces}
\renewcommand{\arraystretch}{1.1}
{\scriptsize
\begin{tabular}{@{}llll@{}}
\toprule
$H^{s,r}$ & Element & $\gamma A^{-1}$ & $\alpha_i$ \\
\midrule
$H^{0,0}$ & $y_1 y_2 y_3 y_4$ & $(0,0,0,0)$ & $1$ \\
\midrule
$H^{0,2}$ & $x_1 x_2 x_3 x_4\, e_1 e_2 e_3 e_4$ & $(\tfrac{1}{3},\tfrac{2}{9},\tfrac{7}{27},\tfrac{5}{27})$ & $-\,\Gamma_p(\tfrac{1}{3})\,\Gamma_p(\tfrac{2}{9})\,\Gamma_p(\tfrac{7}{27})\,\Gamma_p(\tfrac{5}{27})$ \\
\midrule
$H^{1,1}$ & $y_1^2 y_2^2 y_3^2 y_4^2$ & $(0,0,0,0)$ & $p$ \\
 & $x_1 x_2 x_3^2 x_4^4\, e_1 e_2 e_3 e_4$ & $(\tfrac{1}{3},\tfrac{2}{9},\tfrac{16}{27},\tfrac{23}{27})$ & $p\,\Gamma_p(\tfrac{1}{3})\,\Gamma_p(\tfrac{2}{9})\,\Gamma_p(\tfrac{16}{27})\,\Gamma_p(\tfrac{23}{27})$ \\
 & $x_1 x_2 x_3^3 x_4^3\, e_1 e_2 e_3 e_4$ & $(\tfrac{1}{3},\tfrac{2}{9},\tfrac{25}{27},\tfrac{14}{27})$ & $p\,\Gamma_p(\tfrac{1}{3})\,\Gamma_p(\tfrac{2}{9})\,\Gamma_p(\tfrac{25}{27})\,\Gamma_p(\tfrac{14}{27})$ \\
 & $x_1 x_2^2 x_3 x_4^4\, e_1 e_2 e_3 e_4$ & $(\tfrac{1}{3},\tfrac{5}{9},\tfrac{4}{27},\tfrac{26}{27})$ & $p\,\Gamma_p(\tfrac{1}{3})\,\Gamma_p(\tfrac{5}{9})\,\Gamma_p(\tfrac{4}{27})\,\Gamma_p(\tfrac{26}{27})$ \\
 & $x_1 x_2^2 x_3^2 x_4^3\, e_1 e_2 e_3 e_4$ & $(\tfrac{1}{3},\tfrac{5}{9},\tfrac{13}{27},\tfrac{17}{27})$ & $p\,\Gamma_p(\tfrac{1}{3})\,\Gamma_p(\tfrac{5}{9})\,\Gamma_p(\tfrac{13}{27})\,\Gamma_p(\tfrac{17}{27})$ \\
 & $x_1 x_2^2 x_3^3 x_4^2\, e_1 e_2 e_3 e_4$ & $(\tfrac{1}{3},\tfrac{5}{9},\tfrac{22}{27},\tfrac{8}{27})$ & $p\,\Gamma_p(\tfrac{1}{3})\,\Gamma_p(\tfrac{5}{9})\,\Gamma_p(\tfrac{22}{27})\,\Gamma_p(\tfrac{8}{27})$ \\
 & $x_1 x_2^3 x_3 x_4^3\, e_1 e_2 e_3 e_4$ & $(\tfrac{1}{3},\tfrac{8}{9},\tfrac{1}{27},\tfrac{20}{27})$ & $p\,\Gamma_p(\tfrac{1}{3})\,\Gamma_p(\tfrac{8}{9})\,\Gamma_p(\tfrac{1}{27})\,\Gamma_p(\tfrac{20}{27})$ \\
 & $x_1 x_2^3 x_3^2 x_4^2\, e_1 e_2 e_3 e_4$ & $(\tfrac{1}{3},\tfrac{8}{9},\tfrac{10}{27},\tfrac{11}{27})$ & $p\,\Gamma_p(\tfrac{1}{3})\,\Gamma_p(\tfrac{8}{9})\,\Gamma_p(\tfrac{10}{27})\,\Gamma_p(\tfrac{11}{27})$ \\
 & $x_1 x_2^3 x_3^3 x_4\, e_1 e_2 e_3 e_4$ & $(\tfrac{1}{3},\tfrac{8}{9},\tfrac{19}{27},\tfrac{2}{27})$ & $p\,\Gamma_p(\tfrac{1}{3})\,\Gamma_p(\tfrac{8}{9})\,\Gamma_p(\tfrac{19}{27})\,\Gamma_p(\tfrac{2}{27})$ \\
 & $x_1^2 x_2 x_3 x_4^4\, e_1 e_2 e_3 e_4$ & $(\tfrac{2}{3},\tfrac{1}{9},\tfrac{8}{27},\tfrac{25}{27})$ & $p\,\Gamma_p(\tfrac{2}{3})\,\Gamma_p(\tfrac{1}{9})\,\Gamma_p(\tfrac{8}{27})\,\Gamma_p(\tfrac{25}{27})$ \\
 & $x_1^2 x_2 x_3^2 x_4^3\, e_1 e_2 e_3 e_4$ & $(\tfrac{2}{3},\tfrac{1}{9},\tfrac{17}{27},\tfrac{16}{27})$ & $p\,\Gamma_p(\tfrac{2}{3})\,\Gamma_p(\tfrac{1}{9})\,\Gamma_p(\tfrac{17}{27})\,\Gamma_p(\tfrac{16}{27})$ \\
 & $x_1^2 x_2 x_3^3 x_4^2\, e_1 e_2 e_3 e_4$ & $(\tfrac{2}{3},\tfrac{1}{9},\tfrac{26}{27},\tfrac{7}{27})$ & $p\,\Gamma_p(\tfrac{2}{3})\,\Gamma_p(\tfrac{1}{9})\,\Gamma_p(\tfrac{26}{27})\,\Gamma_p(\tfrac{7}{27})$ \\
 & $x_1^2 x_2^2 x_3 x_4^3\, e_1 e_2 e_3 e_4$ & $(\tfrac{2}{3},\tfrac{4}{9},\tfrac{5}{27},\tfrac{19}{27})$ & $p\,\Gamma_p(\tfrac{2}{3})\,\Gamma_p(\tfrac{4}{9})\,\Gamma_p(\tfrac{5}{27})\,\Gamma_p(\tfrac{19}{27})$ \\
 & $x_1^2 x_2^2 x_3^2 x_4^2\, e_1 e_2 e_3 e_4$ & $(\tfrac{2}{3},\tfrac{4}{9},\tfrac{14}{27},\tfrac{10}{27})$ & $p\,\Gamma_p(\tfrac{2}{3})\,\Gamma_p(\tfrac{4}{9})\,\Gamma_p(\tfrac{14}{27})\,\Gamma_p(\tfrac{10}{27})$ \\
 & $x_1^2 x_2^2 x_3^3 x_4\, e_1 e_2 e_3 e_4$ & $(\tfrac{2}{3},\tfrac{4}{9},\tfrac{23}{27},\tfrac{1}{27})$ & $p\,\Gamma_p(\tfrac{2}{3})\,\Gamma_p(\tfrac{4}{9})\,\Gamma_p(\tfrac{23}{27})\,\Gamma_p(\tfrac{1}{27})$ \\
 & $x_1^2 x_2^3 x_3 x_4^2\, e_1 e_2 e_3 e_4$ & $(\tfrac{2}{3},\tfrac{7}{9},\tfrac{2}{27},\tfrac{13}{27})$ & $p\,\Gamma_p(\tfrac{2}{3})\,\Gamma_p(\tfrac{7}{9})\,\Gamma_p(\tfrac{2}{27})\,\Gamma_p(\tfrac{13}{27})$ \\
 & $x_1^2 x_2^3 x_3^2 x_4\, e_1 e_2 e_3 e_4$ & $(\tfrac{2}{3},\tfrac{7}{9},\tfrac{11}{27},\tfrac{4}{27})$ & $p\,\Gamma_p(\tfrac{2}{3})\,\Gamma_p(\tfrac{7}{9})\,\Gamma_p(\tfrac{11}{27})\,\Gamma_p(\tfrac{4}{27})$ \\
 & $x_1^3 x_2 x_3 x_4^3\, e_1 e_2 e_3 e_4$ & $(1,0,\tfrac{1}{3},\tfrac{2}{3})$ & $p\,\Gamma_p(1)\,\Gamma_p(\tfrac{1}{3})\,\Gamma_p(\tfrac{2}{3})$ \\
 & $x_1^3 x_2 x_3^2 x_4^2\, e_1 e_2 e_3 e_4$ & $(1,0,\tfrac{2}{3},\tfrac{1}{3})$ & $p\,\Gamma_p(1)\,\Gamma_p(\tfrac{2}{3})\,\Gamma_p(\tfrac{1}{3})$ \\
 & $x_1^3 x_2 x_3^3 x_4\, e_1 e_2 e_3 e_4$ & $(1,0,1,0)$ & $p\,\Gamma_p(1)^2$ \\
\midrule
$H^{2,0}$ & $x_1^2 x_2^3 x_3^3 x_4^4\, e_1 e_2 e_3 e_4$ & $(\tfrac{2}{3},\tfrac{7}{9},\tfrac{20}{27},\tfrac{22}{27})$ & $-p^2\,\Gamma_p(\tfrac{2}{3})\,\Gamma_p(\tfrac{7}{9})\,\Gamma_p(\tfrac{20}{27})\,\Gamma_p(\tfrac{22}{27})$ \\
\midrule
$H^{2,2}$ & $y_1^3 y_2^3 y_3^3 y_4^3$ & $(0,0,0,0)$ & $p^2$ \\
\bottomrule
\end{tabular}
}
\renewcommand{\arraystretch}{1.0}
\end{table}

If $p=5$, then the degree of $X_A$ divides $p-1$ but $\det(A)=108\nmid 4$. Similarly to Example \ref{ex:3.3}, we have
\[
ST_5(A,\langle J\rangle)=4 + 2\cdot 5 + 4\cdot 5^2 + 4\cdot 5^3 + 5^4 + 5^5 + 5^7 + 2\cdot 5^9 + \cdots \in\Z_5,
\]
which is a non-integer rational number while a brute-force calculation shows that $\#X_A(\mathbb F_5)=36$. On the other hand, if $p=109$ (the smallest prime such that $p-1$ is divisible by $\det(A)=108$ then $ST_{109}(A,\langle J\rangle)=12{,}147$, matches the brute-force count.
\end{example}

\section{The Fermat Quintic and Its Quotients}\label{sec:quintic-and-quotients}

We illustrate the orbifold trace zeta calculation with the Fermat quintic threefold, where the zeta function has been computed by Candelas, de la~Ossa, and Rodriguez~Villegas~\cite{CdlORV03}.

\begin{example}\label{ex:quintic}
Consider the Fermat quintic threefold
\[
M : x_0^5 + x_1^5 + x_2^5 + x_3^5 + x_4^5 = 0 \quad\text{in}\;\mathbb P^4,
\]
with $A=5I_5$, degree $d=5$, $Q=(1,1,1,1,1)$, $\det(A)=5^5$, and $G=\langle J\rangle$ of order~$5$. In \cite{CdlORV03}, the zeta function for the quintic is described for all possible values of $\rho$, the multiplicative order of~$p$ modulo~$5$. By Euler's theorem, the possible values are $\rho\in\{1,2,4\}$. \cite[Equation~11.12]{CdlORV03} gives the zeta function of the one-parameter family $M_\psi$ as
\[
\zeta_p(M_\psi, t) = \frac{R_1(t,\psi)\,R_A(p^\rho t^\rho,\psi)^{20/\rho}\,R_B(p^\rho t^\rho,\psi)^{30/\rho}}{(1-t)(1-pt)(1-p^2 t)(1-p^3 t)},
\]
where $R_1$, $R_A$, $R_B$ are quartic polynomials depending on $p$ and~$\psi$. The notation $R_A^2$ refers to the role of $R_A$ in the Euler curve's zeta function $\zeta_A=R_A^2/[(1-u)(1-pu)]$; the polynomial $R_A(pt)^2$ itself is quartic. At the Fermat point $\psi=0$ with $\rho=1$, one has $R_A=R_B$ and the numerator becomes
\[
P_3(t) = R_1(t)\cdot\bigl[R_A(pt)^2\bigr]^{50}
\]
with $\deg P_3 = 4 + 4\times 50 = 204 = b_3(M)$. Following the parametrization of \cite[Table~12.1]{CdlORV03}, the quartics take the form
\begin{align}
R_1(t) &= 1+a_1\,t + b_1\,p\,t^2 + a_1\,p^3\,t^3 + p^6\,t^4,\label{eq:6.1}\\
R_A(pt)^2 &= 1+c\,p\,t+d\,p^2\,t^2+c\,p^4\,t^3+p^6\,t^4,\label{eq:6.2}
\end{align}
where the integer coefficients $(a_1,b_1)$ and $(c,d)$ depend on~$p$.

The orbifold cohomology has $208$ basis elements with Hodge numbers $h^{0,0}=h^{3,3}=1$, $h^{1,1}=h^{2,2}=1$, $h^{0,3}=h^{3,0}=1$, $h^{1,2}=h^{2,1}=101$. All elements in $H^{1,2}$ have ${\rm age}^\vee(\gamma) = 2$ and $\gamma A^{-1}=(a_1/5,\dots,a_5/5)$ with each $a_i\in\{1,2,3,4\}$ and $\sum a_i = 10$. Elements in $H^{2,1}$ are of the same form with ${\rm age}^\vee(\gamma)=3$ and $\sum a_i=15$. The $204$ elements with odd total Hodge degree $s+r=3$---one from each of $H^{0,3}$ and $H^{3,0}$, and $101$ from each of $H^{1,2}$ and $H^{2,1}$---contribute to the numerator~$P_3(t)$, while the remaining~$4$ with even $s+r$ give the denominator.

By~\eqref{eq:zeta-factored}, the Berglund--H\"ubsch zeta function factors as $\zeta_p(A,\langle J\rangle,t) = \prod_k P_k(t)^{(-1)^{k+1}}$, where $P_k(t)=\prod_{s_i+r_i=k}(1-\alpha_i\,t)$. The even-degree factors are
\begin{alignat*}{2}
P_0(t) &= 1-t, &\qquad P_4(t) &= 1-p^2\,t,\\
P_2(t) &= 1-p\,t, & P_6(t) &= 1-p^3\,t,
\end{alignat*}
and the full zeta function should assemble as
\[
\zeta_p(A,\langle J\rangle,t)
= \frac{P_3(t)}{(1-t)(1-pt)(1-p^2 t)(1-p^3 t)}
= \frac{R_1(t)\cdot\bigl[R_A(pt)^2\bigr]^{50}}{(1-t)(1-pt)(1-p^2 t)(1-p^3 t)}\,.
\]

\medskip
We now verify the conjecture at the first four primes satisfying $\det(A) = 3125\mid (p-1)$ and $\rho = 1$, namely $p=37501$, $62501$, $112501$, and~$118751$. At these sizes, brute-force enumeration is infeasible. Instead, we compare the Berglund--H\"ubsch point count against the point count given by \cite[Equation~9.19]{CdlORV00}. Moreover, we compare the orbifold zeta function against the zeta function calculated as in \cite{CdlORV00}, which computes $R_1$ and $R_A(pt)^2$ by solving for the coefficients $a_1, b_1, c$ and $d$ from explicitly calculated point counts. The two methods produce identical degree-$204$ numerator polynomials $P_3(t)$ in each case; the quartic coefficients and point counts are listed in Table~\ref{tab:quintic-large-primes}. All computations are carried out in~\cite{P}.

\begin{table}[ht]
\centering
\caption{Quartic factors and point counts at the first four primes with $\det(A)=3125\mid p-1$.}\label{tab:quintic-large-primes}
\begin{tabular}{@{}rrrrrr@{}}
\toprule
$p$ & $a_1$ & $b_1$ & $c$ & $d$ & $N_p$ \\
\midrule
$37501$  & $-8414879$  & $1287051631$    & $271$  & $93331$  & $52740499948675$ \\
$62501$  & $30690371$  & $9257997381$    & $71$   & $99981$  & $244156502943925$ \\
$112501$ & $17212621$  & $-915109619$    & $-479$ & $278581$ & $1423876073488675$ \\
$118751$ & $11436371$  & $-14628025869$  & $521$  & $275331$ & $1674620058737425$ \\
\bottomrule
\end{tabular}
\end{table}

\noindent Substituting the coefficients from Table~\ref{tab:quintic-large-primes} into \eqref{eq:6.1} and \eqref{eq:6.2} or factoring the Berglund--H\"ubsch zeta function directly gives the following explicit quartic factors.

At $p=37501$:
\begin{align*}
R_1(t) &= 1-8414879\,t+1287051631\,p\,t^2-8414879\,p^3\,t^3+p^6\,t^4,\\
R_A(pt)^2 &= 1+271\,p\,t+93331\,p^2\,t^2+271\,p^4\,t^3+p^6\,t^4.
\end{align*}

At $p=62501$:
\begin{align*}
R_1(t) &= 1+30690371\,t+9257997381\,p\,t^2+30690371\,p^3\,t^3+p^6\,t^4,\\
R_A(pt)^2 &= 1+71\,p\,t+99981\,p^2\,t^2+71\,p^4\,t^3+p^6\,t^4.
\end{align*}

At $p=112501$:
\begin{align*}
R_1(t) &= 1+17212621\,t-915109619\,p\,t^2+17212621\,p^3\,t^3+p^6\,t^4,\\
R_A(pt)^2 &= 1-479\,p\,t+278581\,p^2\,t^2-479\,p^4\,t^3+p^6\,t^4.
\end{align*}

At $p=118751$:
\begin{align*}
R_1(t) &= 1+11436371\,t-14628025869\,p\,t^2+11436371\,p^3\,t^3+p^6\,t^4,\\
R_A(pt)^2 &= 1+521\,p\,t+275331\,p^2\,t^2+521\,p^4\,t^3+p^6\,t^4.
\end{align*}

\noindent We list the point counts $N_{p^\nu}=\#M(\F_{p^\nu})$ extracted from the power series expansion $\log Z(t)=\sum_{\nu\geq 1}N_{p^\nu}\,t^\nu/\nu$ for $\nu=1,2,3$ in Table~\ref{tab:quintic-point-counts}.

\begin{table}[ht]
\centering
\caption{Point counts over $\F_{p^\nu}$ for the Fermat quintic at the first four primes $p$ with $3125\mid (p-1)$.}\label{tab:quintic-point-counts}
\renewcommand{\arraystretch}{1.1}
\adjustbox{max width=\textwidth}{%
\footnotesize
\begin{tabular}{@{}rlll@{}}
\toprule
$p$ & $N_p$ & $N_{p^2}$ & $N_{p^3}$ \\
\midrule
$37501$ & $52740499948675$ & $2781359284579565153342704675$ & $146684977590415830796619713088162291370925$ \\
$62501$ & $244156502943925$ & $59610367065473834056333248175$ & $14554010838275253898527781005005357802359425$ \\
$112501$ & $1423876073488675$ & $2027394654052389497109812802175$ & $2886738507011079685634568411080155451821512175$ \\
$118751$ & $1674620058737425$ & $2804294711853468324003533172175$ & $4696079921757156471284481243293773424762344675$ \\
\bottomrule
\end{tabular}
}
\end{table}

As it happens, the conjecture also holds for the $\rho = 1$ primes listed in \cite{CdlORV03}: $11$, $31$, $41$, $61$, $71$, $101$.
In every case the orbifold trace formula reproduces the calculations of \cite{CdlORV03} exactly.
\end{example}

\begin{example}\label{ex:greene-plesser}
Let $A$ be the diagonal matrix $5I_5$ and let $X_A$ denote the Fermat quintic $x_1^5 + \cdots + x_5^5 = 0$ in $\mathbb{P}^4$. Greene and Plesser \cite{GP} showed that quotients $[X_A(\mathbb C)/G]$ by diagonal phase symmetry groups $G$ yield a pair of Calabi--Yau 3-folds exhibiting mirror symmetry at the level of Hodge numbers. We recover this from the orbifold trace formula for one such mirror pair with $\chi = \pm 8$.

\medskip\noindent\textbf{The $|G|=125$ orbifold.}
Taking the symmetry group $G = \langle J,\,(0,1,2,3,4),\,(0,1,1,4,4)\rangle$ of order~$125$, the orbifold $[X_A/G]$ has $80$ cohomology basis elements with Chen--Ruan Hodge numbers $h^{1,1}=17$, $h^{2,1}=21$. The orbifold zeta function calculated using \cite{P} is
\begin{equation}\label{eq:6.3}
\zeta_p(A, G,t)
= \frac{R_1(t)\cdot\bigl[R_A(pt)^2\bigr]^{10}}{(1-t)(1-pt)^{17}(1-p^2 t)^{17}(1-p^3 t)},
\end{equation}
where $R_1$ and $R_A(pt)^2$ are, respectively, as in \eqref{eq:6.1} and \eqref{eq:6.2} with $(a_1,b_1)=(-89,351)$ and $(c,d)=(1,-9)$. In accordance with Theorem \ref{thm:3.6}, all $44$ roots of the numerator satisfy the Riemann hypothesis $|\alpha_i|=p^{3/2}$ and the Chen--Ruan Euler characteristic of this orbifold, $-8$ matches the difference between the degree of the denominator and that of the numerator.

\medskip\noindent\textbf{The $|G^T|=25$ mirror orbifold.}
The dual group $G^T = \langle J,\,(0,4,1,1,4)\rangle$ has order~$25$. The orbifold $[X_A/G^T]$ again has $80$ basis elements but with mirror-exchanged Hodge numbers $h^{1,1}=21$, $h^{2,1}=17$. The orbifold zeta function calculated using \cite{P} is
\begin{equation}\label{eq:6.4}
\zeta_p(A, G^T,t)
= \frac{R_1(t)\cdot\bigl[R_A(pt)^2\bigr]^{8}}{(1-t)(1-pt)^{21}(1-p^2 t)^{21}(1-p^3 t)},
\end{equation}
with the same $R_1$ and $R_A(pt)^2$ as in \eqref{eq:6.3}. In accordance with Theorem \ref{thm:3.6}, all $36$ roots of the numerator satisfy the Riemann hypothesis $|\alpha_i|=p^{3/2}$ and the Chen--Ruan Euler characteristic of this orbifold, $8$ matches the difference between the degree of the denominator and that of the numerator.
\end{example}

\section{Alternative Proof via Monsky--Washnitzer Cohomology}\label{sec:alternative-proof}

In this section we prove Conjecture~\ref{conj:supertrace-counts-points} for Fermat-type Berglund--H\"ubsch hypersurfaces by comparing the orbifold supertrace against the point-counting formula from Monsky--Washnitzer cohomology~\cite{M}. The idea is to decompose the point count into terms matching the orbifold eigenvalues, and show that the remaining terms cancel pairwise. We then illustrate how this cancellation extends with some modifications to an explicit non-diagonal example. Throughout this section, we restrict our attention exclusively to Berglund--H\"ubsch $n \times n$ matrices $A$ such that the potential $W_A$ is a homogeneous Calabi--Yau polynomial. This ensures that $X_A \subset \mathbb{P}^{n-1}$ is a standard unweighted projective hypersurface, allowing for direct application of Monsky's point-counting formula. We further assume $n\mid(p-1)$.

By \cite{M}, the number of $\F_p$-rational points on a projective hypersurface can be expressed in terms of a Frobenius operator $\Fr$ acting on Monsky--Washnitzer overconvergent cohomology. Concretely, let $\cR^\dagger\subset K[\![x_0, x_1,\ldots,x_n]\!]$ (where $K=\Q_p(\pi)$, $\pi^{p-1}=-p$) be the ring of overconvergent series $\sum_\gamma a_\gamma(\pi x_0)^{|\gamma A^{-1}|}x^\gamma$ with $|\gamma A^{-1}|\in\N$, and let $\cR_I^\dagger\subset\cR^\dagger$ be the subspace supported on monomials $x^\gamma$ with $\gamma_i>0$ for all $i\in I$. By \cite[Lemma~7.4]{M}, the number of $\F_p$-points on $X_A\colon\{W_A=0\}\subset\mathbb{P}^{n-1}$ is
\begin{equation}\label{eq:mw-point-count}
N_{MW}(X_A) = \frac{p^{n-1}-1}{p-1} + \frac{(-1)^n}{p}\sum_{I\subseteq\{1,\ldots,n\}} (-p)^{|\overline{I}|}\,\Tr\!\left(\left.\Fr\right|_{\cR_I^\dagger}\right).
\end{equation}
In this setting, the Frobenius operator is $\Fr = \Psi \circ \widetilde{Z}_A$, where $\widetilde{Z}_A$ is multiplication by
\[
e^{\pi(x_0 W_A(x)-x_0^p W_{pA}(x))}
= \sum_k c_k \pi^{|k|} x_0^{|k|} x^{kA}\,,
\]
and $\Psi$ maps $x_0^{\gamma_0}x^\gamma \mapsto x_0^{\gamma_0/p}x^{\gamma/p}$ if $p$ divides all exponents, and to $0$ otherwise. Representing this Frobenius operator with respect to the standard monomial basis, we find that the diagonal entry corresponding to $x_0^{|\gamma A^{-1}|}x^\gamma$ is $c_k \pi^{|k|}$ where $\gamma +kA=p\gamma$, which implies $k = (p-1)\gamma A^{-1}$. Note that since $k \in \N^n$, this relation can only be solved for $k$ if $(\gamma A^{-1})_i \geq 0$. Because $A$ is Berglund--H\"ubsch, the vector $k$ is integer-valued. Furthermore, $|k + \gamma A^{-1}| = p|\gamma A^{-1}|$ and the trace reduces to a sum of diagonal entries:

\begin{equation}\label{eq:trace-diagonal}
\Tr\!\left(\left.\Fr\right|_{\cR_I^\dagger}\right) = \sum_{\gamma} c_{(p-1)\gamma A^{-1}}\,(-p)^{|\gamma A^{-1}|},
\end{equation}
where the sum is over all $\gamma$ in the cone $\gamma A^{-1}\ge 0$ with $|\gamma A^{-1}|\in\N$ and $(\gamma A^{-1})_i>0$ for $i\in I$, and $c_k$ are the coefficients of the function $e^{\pi(W_A(x)-W_{pA}(x))}=\sum_k c_k \pi^{|k|}x^{kA}$.

Grouping summands of the form $\gamma + kA$ for fixed $\gamma$ (with $0\le(\gamma A^{-1})_i\le 1$) and variable $k\in\N^n$, we define
\begin{equation}\label{eq:s-gamma}
S(\gamma) = (-p)^{|\gamma A^{-1}|}\sum_{k\in\N^n} c_{(p-1)\gamma A^{-1}+(p-1)k}\,(-p)^{|k|}.
\end{equation}

\begin{lemma}\label{lemma:s-gamma-gauss}
Let $A$ be Berglund--H\"ubsch and suppose that $\gamma$ satisfies $0\le(\gamma A^{-1})_i\le 1$ for all $i$. Then,
\begin{equation}\label{eq:s-gamma-identity}
(p-1)^n\,S(\gamma) = (-1)^n\,(-p)^{|\gamma A^{-1}|}\,\Gamma_p(\gamma A^{-1}).
\end{equation}
\end{lemma}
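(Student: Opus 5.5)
The plan is to reduce the statement to a one-variable identity and then invoke Dwork's expression for the Morita $p$-adic gamma function in terms of the coefficients of the splitting function. The key structural point is that $e^{\pi(W_A(x)-W_{pA}(x))}$ is a product over the monomials of $W_A$, not over the variables $x_i$. Indeed, $x^{e_iA}$ is the $i$-th monomial and $W_{pA}(x)=\sum_i x^{p\,e_iA}$. Hence
\[
e^{\pi(W_A-W_{pA})}=\prod_{i=1}^n \theta(x^{e_iA}),\qquad \theta(z)=e^{\pi(z-z^p)}=\sum_{m\ge 0}\lambda_m z^m .
\]
Expanding gives $c_k\pi^{|k|}=\prod_i \lambda_{k_i}$, so $c_k=\prod_i c_{k_i}^{(1)}$, where $c^{(1)}_m\pi^m=\lambda_m$ are the one-variable Dwork coefficients. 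This agrees with the recursion in the definition of $\Fr_A$, which factors coordinatewise in $k$. Writing $a=\gamma A^{-1}$, the sum defining $S(\gamma)$ in \eqref{eq:s-gamma} then factors as
\[
S(\gamma)=\prod_{i=1}^n\Big((-p)^{a_i}\sum_{m\ge 0} c^{(1)}_{(p-1)(a_i+m)}(-p)^{m}\Big),
\]
using $|a|=\sum a_i$, $|k|=\sum k_i$ and $(p-1)a_i\in\Z$, the latter because $\det(A)\mid p-1$. So it suffices to prove the one-variable identity
\[
(p-1)\,(-p)^{a}\sum_{m\ge0}c^{(1)}_{(p-1)(a+m)}(-p)^{m}=-(-p)^{a}\,\Gamma_p(a),\qquad a\in\tfrac{1}{p-1}\Z\cap[0,1].
\]

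For $0\le a<1$, write $a=\frac{s}{p-1}$ with $0\le s<p-1$. I expect the identity to follow from the classical formula of Dwork (see Boyarsky, or Robert's \emph{A Course in $p$-adic Analysis}) expressing $\Gamma_p$ through the coefficients of $\theta$:
\[
\Gamma_p\!\Big(\tfrac{s}{p-1}\Big)=-(p-1)\,\pi^{-s}\sum_{m\ge0}\lambda_{s+m(p-1)}\,(-p)^{m}\cdot(\text{normalization}).
\]
Substituting $\lambda_j=c^{(1)}_j\pi^j$ and $\pi^{p-1}=-p$ turns $\pi^{s+m(p-1)}$ into $\pi^s(-p)^m$, which matches the sum above. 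The main obstacle is pinning down this normalization, that is, checking the sign and the factor $p-1$ against the conventions fixed here ($c_0=1$, $\pi^{p-1}=-p$).

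To settle the normalization I would first test the case $a=0$. Then the left side is $(p-1)\sum_m c^{(1)}_{m(p-1)}(-p)^m$. This is $(p-1)$ times the average of $\theta(z)$ over the $(p-1)$-th roots of unity, and it equals $\sum_{\zeta^{p-1}=1}\theta(\zeta)$. By Dwork's theorem, $\theta(\zeta)$ is the additive character $\psi(\bar\zeta)$, so the sum is $\sum_{t\in\F_p^\times}\psi(t)=-1$. This matches $-\Gamma_p(0)=-1$.

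For general $s$, the same averaging trick gives $(p-1)\pi^{s}\sum_m c^{(1)}_{s+m(p-1)}(-p)^m=\sum_{\zeta}\zeta^{-s}\theta(\zeta)$, where $\zeta$ runs over the $(p-1)$-th roots of unity. This is a Gauss sum $g(\omega^{-s})$ for the Teichmüller character $\omega$. The Gross--Koblitz formula gives $g(\omega^{-s})=-\pi^{s}\Gamma_p(\frac{s}{p-1})$, and since $\pi^s=(-p)^{a}$ this yields exactly the desired identity.

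The endpoint $a=1$ remains. Since $a=1$ and $a=0$ give shifted sums, it is handled by reindexing $m\mapsto m+1$: the $m=0$ term $c^{(1)}_0=1$ is removed and the result compared with the $a=0$ case. The identity then holds because $\Gamma_p(1)=-1$ and $\Gamma_p(0)=1$. Multiplying the $n$ one-variable identities yields \eqref{eq:s-gamma-identity}, with $(-1)^n$ coming from the product of the minus signs.
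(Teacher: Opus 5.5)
Your proposal is correct and follows essentially the same route as the paper's proof. Like the paper, you factor $c_k=\prod_i c_{k_i}$ to reduce to a one-variable identity, get the fractional case from Gross--Koblitz (your averaging over $(p-1)$-th roots of unity just spells out the Gauss-sum link the paper takes from its reference), and handle $a=0$ and $a=1$ through $\sum_{t\in\F_p^\times}\psi(t)=-1$ and a reindexing, which is the paper's $G_0=-1$ argument. One small slip: in your tentative Dwork--Boyarsky display the factor $(-p)^m$ should not appear next to $\lambda_{s+m(p-1)}$, but your averaging derivation gets the normalization right, so the proof does not depend on that display.
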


\begin{proof}
For $\alpha \in \frac{1}{p-1} \Z \setminus \Z$ and $t  = (p-1)(1 - \alpha)$, the Gross--Koblitz formula (\cite[Theorem~4]{A}) implies that
\begin{equation}\label{eq:s_gamma_gamma_p_identity}
(p-1) \sum_{m \geq 0} c_{(p-1)m + (p - 1)\langle \alpha \rangle} (-p)^m \pi^{(p-1)\langle \alpha \rangle} = p (-p)^{\langle \alpha \rangle - 1} \Gamma_p\left( \langle \alpha \rangle \right)\,,
\end{equation}
where $c_m$ denotes the $1$-dimensional coefficients (i.e., for $n=1$), $\langle \alpha \rangle = 0$ for $\alpha = 0$, and is otherwise the unique rational number satisfying $0 < \langle \alpha \rangle \leq 1$  and $\alpha \equiv \langle \alpha \rangle \pmod{\Z}$.

In fact, equation \eqref{eq:s_gamma_gamma_p_identity} also holds for $\alpha = 1$.
To see that, note that the left hand side of the equation becomes
\[
(p-1)  \sum_{m \geq 1} c_{(p-1)m} (-p)^{m} = G_0 - (p - 1) = -p\,,
\]
where we used the fact that $G_0 = -1$, while the right hand side is $p \Gamma_p(1) = -p$, as claimed. A similar argument establishes \eqref{eq:s_gamma_gamma_p_identity} for $\alpha = 0$.

Using $\alpha = \left( \gamma A^{-1} \right)_i$ in \eqref{eq:s_gamma_gamma_p_identity}, we have
\begin{align*}
(p-1)^n S(\gamma) &= (p-1)^n (-p)^{|\gamma A^{-1}|} \sum_{k \in \N^n} c_{(p-1)\gamma A^{-1}+(p-1)k}(-p)^{|k|} \\
                  &= (p-1)^n \prod_{i=1}^n \sum_{k_i \geq 0} c_{(p-1)\left( \gamma A^{-1} \right)_i + (p-1)k_i} (-p)^{k_i} \pi^{(p-1)  \left(\gamma A^{-1} \right)_i} \\
                  &= \prod_{i=1}^n p (-p)^{ \left( \gamma A^{-1} \right)_i - 1} \Gamma_p\left(  \left( \gamma A^{-1} \right)_i \right) \\
                  &= p^n (-p)^{\left| \gamma A^{-1} \right| - n} \Gamma_p \left( \gamma A^{-1} \right) \,,
\end{align*}
where in the second step we used $c_k = \prod_{i=1}^n c_{k_i}$ and $(-p)^{|\gamma A^{-1}|} = \prod_{i=1}^n \pi^{(p-1)(\gamma A^{-1})_i}$. This simplifies to the result.
\end{proof}

\begin{proposition}\label{claim:fermat-hypersurface}
Let $A=n I_n$ be a Fermat-type Berglund--H\"ubsch matrix and suppose that $n \mid (p-1)$. Then
\[
{N}_{MW}(X_A) = \mathrm{ST}_p(A,\langle J\rangle).
\]
\end{proposition}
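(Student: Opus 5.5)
We will expand both sides as sums over the finite set of fractional vectors $a=\gamma A^{-1}\in\{0,\tfrac1n,\ldots,\tfrac{n-1}n,1\}^n$ and match them term by term. For $A=nI_n$ we have $G_A=G_{A^T}=(\Z/n)^n$ and $\langle J\rangle^T=\{\gamma:|\gamma|\equiv 0 \bmod n\}$. In the Fermat case the cohomology of $(\cB_A^{\langle J\rangle},D_A)$ decomposes as follows.
\begin{itemize}
\item The untwisted sector $\lambda=0$ contributes the $\gamma$ with all $a_i\in\{\tfrac1n,\ldots,\tfrac{n-1}n\}$ and $|a|\in\Z$, i.e.\ the invariant Jacobian ring.
\item The twisted sectors $\lambda=kJ$, $k=1,\ldots,n-1$, have $\dim(\lambda)=0$, so only $\gamma=0$ survives. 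These give eigenvalues $p^{k-1}\cdot p^{0}$, which sum to $1+p+\cdots+p^{n-2}$.
\end{itemize}
Thus $\mathrm{ST}_p(A,\langle J\rangle)=\frac{p^{n-1}-1}{p-1}+\sum_{a\in(0,1)^n,\,|a|\in\Z}(-1)^{n}(-1)^{|a|}p^{|a|-1}\Gamma_p(a)$. This exactly mirrors the first term of \eqref{eq:mw-point-count}, so it suffices to show that the second term of \eqref{eq:mw-point-count} equals the untwisted sum.

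\textbf{Rewriting the Monsky--Washnitzer term.} Using \eqref{eq:trace-diagonal}, split each $\gamma$ in the cone as $\gamma=\gamma_0+kA$, where $\gamma_0$ has $a=\gamma_0A^{-1}\in[0,1]^n$ chosen with $a_i=1$ rather than $0$ whenever $i\in I$ and the coordinate is integral. This grouping should turn $\Tr(\Fr|_{\cR_I^\dagger})$ into $\sum_{a}S(\gamma_0)$, where $a$ ranges over vectors with $a_i\in(0,1]$ for $i\in I$, $a_i\in[0,1)$ for $i\notin I$, and $|a|\in\Z$. By Lemma~\ref{lemma:s-gamma-gauss}, each summand is $(p-1)^{-n}(-1)^n(-p)^{|a|}\Gamma_p(a)$. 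Next, interchange the sum over $I$ with the sum over $a$, classifying $a$ by its support pattern: let $F=\{i:a_i\in(0,1)\}$ be the fractional coordinates and $E=\{i:a_i\in\{0,1\}\}$ the integral ones. For a fixed fractional part on $F$, the admissible pairs $(I,a|_E)$ consist of an arbitrary choice of $I\cap F$ together with, for $i\in E$, either $i\in I$ with $a_i=1$ or $i\notin I$ with $a_i\in\{0,1\}$. Using $\Gamma_p(0)=1$ and $\Gamma_p(1)=-1$, the factor coming from $E$ should factor as a product of local sums.

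\textbf{Main obstacle: the cancellation.} The key step, and the delicate one, is showing that the factor $\sum_{I\cap F}(-p)^{|F\setminus I|}=(1-p)^{|F|}$ on the fractional coordinates combines with the $E$-local factors so that every pattern with $E\ne\emptyset$ cancels, while patterns with $E=\emptyset$ leave exactly $(p-1)^n$ times the untwisted eigenvalue, with the correct sign $(-1)^{n+|a|}p^{|a|-1}$. Concretely, for each $i\in E$ I expect the three options to contribute $-p\cdot(-1)\cdot(-p)^{1}$, $(-p)\cdot 1$ and $(-p)\cdot(-1)(-p)$ up to the global normalization. I would first try to verify that these sum to $0$, which would kill any pattern with $E\neq\emptyset$. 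If they do not sum to zero, the leftover should instead be a geometric term; in that case I would collect these leftovers and show that they reassemble into a multiple of $\frac{p^{n-1}-1}{p-1}$ that cancels against a correction. The integrality constraint $|a|\in\Z$ couples the coordinates, so the factorization is not literal. I would handle this by inserting the character sum $\frac1n\sum_{\zeta^n=1}\zeta^{n|a|}$, which uses $n\mid p-1$, to decouple the coordinates. Finally, the powers of $(p-1)$ and the sign $(-1)^n\cdot(-1)^n/p$ must be tracked carefully to reach $p^{|a|-1}$; this bookkeeping is where I expect errors to hide, so I would check it against the cubic curve of Example~\ref{ex:3.3} with $n=3$.
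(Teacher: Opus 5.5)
\textbf{There is a genuine gap: the cancellation that carries the whole proof is left unproved, and your set-up would make it fail.}

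Your overall reduction is the same as the paper's:
\begin{itemize}
\item the twisted sectors $\lambda=kJ$ give $\frac{p^{n-1}-1}{p-1}$;
\item the vectors with all $a_i\in(0,1)$ lie in every $\cR_I^\dagger$, so $\sum_I(-p)^{|\overline I|}=(1-p)^n$ and Lemma~\ref{lemma:s-gamma-gauss} produce exactly $(-1)^np^{-1}(-p)^{|a|}\Gamma_p(a)$;
\item everything then hinges on the groups with some integral coordinate contributing zero.
\end{itemize}

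You correctly note that after grouping $\gamma=\gamma_0+kA$ the base point has $a_i\in[0,1)$ for $i\notin I$. But when you list the options for $i\in E$ you allow ``$i\notin I$ with $a_i\in\{0,1\}$''. The case $i\notin I$, $a_i=1$ is not a separate base point. It is the $k_i=1$ term of the $S$-sum based at $a_i=0$, so including it double counts.

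With the correct two options, the local weight of $i\in E$ is:
\begin{itemize}
\item $(-p)^{0}\cdot(-p)^{1}\Gamma_p(1)=p$ for $i\in I$ with $a_i=1$;
\item $(-p)^{1}\cdot(-p)^{0}\Gamma_p(0)=-p$ for $i\notin I$ with $a_i=0$.
\end{itemize}
These sum to $0$, so every pattern with $E\neq\emptyset$ vanishes. This is exactly the paper's pairing of $(I,\gamma_{\alpha,I})$ with $(I\cup\{j\},\gamma_{\alpha,I}+e_jA)$ via $\Gamma_p(1)=-1$, written in factored form. Your three proposed weights also carry a spurious factor $-p$ in the $i\in I$ case. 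They are $-p^2$, $-p$ and $-p^2$, summing to $-2p^2-p\neq 0$, so the computation as you set it up does not close.

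Your fallback plan is not viable either. The first term of \eqref{eq:mw-point-count} has already been matched by the twisted sectors, so there is no correction term left to absorb a leftover. A nonzero contribution from $E\neq\emptyset$ would falsify the identity rather than reassemble into $\frac{p^{n-1}-1}{p-1}$.

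The character-sum decoupling is unnecessary. On $E$ the coordinates are $0$ or $1$, so the condition $|a|\in\Z$ depends only on $a|_F$, and the factorization over $E$ is literal.

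Finally, Example~\ref{ex:3.3} is a chain matrix, not of the form $nI_n$, so it cannot serve as a sanity check for this proposition. The paper itself notes that the Fermat argument fails verbatim for chains. Check against the Fermat cubic $3I_3$ or the quartic example instead.
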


\begin{proof}
The elements $(\gamma,\lambda)$ with $\delta(\gamma,\lambda)\ne 0$ and $\lambda\ne 0$ are of the form $(0,kJ)$ for $k\in\{1,\ldots,n-1\}$. Each contributes $p^{{\rm age}(kJ)-1}$ to ${\rm ST}_p(A,\langle J\rangle)$, so their total contribution is  $p^0+p^1+\cdots+p^{n-2}=(p^{n-1}-1)/(p-1)$, matching the first term in \eqref{eq:mw-point-count}.

We will show that the second term of \eqref{eq:mw-point-count},
\begin{equation}\label{eq:second-term-formula}
\frac{(-1)^n}{p}
\sum_{I \subseteq \{1 , \ldots, n \}}
(-p)^{|\overline{I}|}
\sum_{\gamma} c_{(p-1)\gamma A^{-1}}(-p)^{|\gamma A^{-1}|},
\end{equation}
can be written as
\begin{equation}\label{eq:basis-cancel-term-decomposition}
\sum_{\substack{\gamma \in G^T \\ \delta(\gamma,0)=1}} (-1)^n p^{-1} (-p)^{|\gamma A^{-1}|}\Gamma_p(\gamma A^{-1}) + Z,
\end{equation}
where $Z$ is all remaining terms from \eqref{eq:second-term-formula}. In particular, we will show that $Z = 0$.

Elements $(\gamma,0) \in G^T \times G$ with $\delta(\gamma,0)\ne 0$ have $\gamma_i\in\{1,\ldots,n-1\}$ and satisfy $0<(\gamma A^{-1})_i<1$ for all $i$ by \cite[Corollary~6.3]{AP}. These $\gamma$ appear in every $I$-term of the sum \eqref{eq:second-term-formula}. Collecting $S(\gamma)$ from all $2^n$ subsets $I$ gives
\[
\frac{(-1)^n}{p}\,S(\gamma)\sum_{t=0}^n\binom{n}{t}(-p)^t = \frac{(p-1)^n\,S(\gamma)}{p}\,.
\]
Applying Lemma~\ref{lemma:s-gamma-gauss} yields $(-1)^n\,p^{-1}\,(-p)^{|\gamma A^{-1}|}\,\Gamma_p(\gamma A^{-1}) = (-1)^n \alpha(\gamma,0)$ since ${\dim(0)}=n$ and $|\gamma A^{-1}| = {\rm age}^\vee(\gamma)$.

So far, we've accounted for all of $\mathrm{ST}_p(A, \langle J \rangle)$. The remaining terms from \eqref{eq:mw-point-count} are
\begin{equation}\label{eq:z-sum}
Z = \frac{(-1)^n}{p}\sum_\alpha\sum_I (-p)^{|\overline{I}|}\,S(\gamma_{\alpha,I})\,,
\end{equation}
where we partitioned the summands, which come from $\gamma$ with $(\gamma A^{-1})_i\in\{0,1\}$ for some~$i$, into classes~$\alpha$: two elements $\gamma,\gamma'$ belong to the same class when $(\gamma A^{-1})_i = (\gamma' A^{-1})_i$ or both $(\gamma A^{-1})_i,(\gamma' A^{-1})_i\in\{0,1\}$, for all~$i$. Within each class, elements differ by $\sum t_i e_i A$ with $t_i\in\{0,1\}$.

We show $Z=0$ by cancelling terms in pairs within each class. The term $(-p)^{|\overline{I}|} S(\gamma_{\alpha,I})$ cancels with $(-p)^{|\overline{I \cup \{j\}}|} S(\gamma_{\alpha,I \cup \{j\}})$. In particular,
\[
\frac{(p-1)^n}{(-1)^n}
(-p)^{|\overline{I}|} S(\gamma_{\alpha,I})
= (-p)^{|\overline{I}|} (-p)^{|\gamma_{\alpha,I} A^{-1}|}\Gamma_p(\gamma_{\alpha,I} A^{-1})
\]
while, using $\gamma_{\alpha, I \cup \{j\}} = \gamma_{\alpha, I} + e_j A$, we have
\begin{align*}
\frac{(p-1)^n}{(-1)^n} (-p)^{|\overline{I \cup \{j\}}|} S(\gamma_{\alpha, I \cup \{j\}})
&=(-p)^{|\overline{I}|-1}(-p)^{|\gamma_{\alpha,I}A^{-1}|+1} \Gamma_p((\gamma_{\alpha,I} + e_jA)A^{-1}) \\
&=-(-p)^{|\overline{I}|} (-p)^{|\gamma_{\alpha,I} A^{-1}|} \Gamma_p(\gamma_{\alpha,I}A^{-1})\,,
\end{align*}
where the last line follows from the fact that
\begin{equation}\label{eq:gamma-pair-cancellation}
\Gamma_p((\gamma_{\alpha,I} + e_j A)A^{-1}) = \Gamma_p(\gamma_{\alpha,I}A^{-1})\Gamma_p(1)
\end{equation}
and $\Gamma_p(1) = -1$.

The proof is complete if we can show that all terms in $Z$ are accounted for in pairs that cancel in this fashion. For each class $\alpha$, choose a fixed index $j$ such that $(\gamma A^{-1})_j \in \{0, 1\}$ for all $\gamma$ in the class. We can partition the subsets $I \subseteq \{1,\ldots,n\}$ into non-overlapping pairs $(I, I \cup \{j\})$ where $j \notin I$. For each pair, the term corresponding to $\gamma_{\alpha, I}$ (since $j \notin I$, we have $(\gamma_{\alpha, I} A^{-1})_j = 0$) cancels with the term corresponding to $\gamma_{\alpha, I \cup \{j\}} = \gamma_{\alpha, I} + e_j A$, as shown above. Summing over all such pairs guarantees that the total contribution of the class is $0$, and thus $Z = 0$.
\end{proof}

\begin{remark}
The proof above is stated for $\F_p$, but it carries over to $\F_{p^\nu}$ for any $\nu\ge 1$ by the generalization described in Remark~\ref{rem:section4-generalization}.
\end{remark}

\begin{example}
Let $W_A(x) = x_1^4 + x_2^4 + x_3^4 + x_4^4$ and $G = \langle J \rangle$.
Then vertical orbifold cohomology comes from $\lambda = J, 2 J, 3 J$ which correspond to $h^{0,0}=1$, $h^{1,1}=1$, $h^{2,2} = 1$, respectively. The associated orbifold trace contributions are $1 + p + p^2$, which correspond to the first term of \eqref{eq:mw-point-count}. There are 21 elements of the form $(\gamma, 0)$ in $G^T \times G$. They satisfy $(\gamma A^{-1})_i \in \{ \frac{1}{4}, \frac{2}{4}, \frac{3}{4} \}$ and $|\gamma A^{-1}| \in \Z$ and account for the remaining terms from \eqref{eq:supertrace-nu}.
The possible values of $\gamma A^{-1}$ are listed up to permuation in Table~\ref{table:fermat-quartic-partitions}.

The remaining temrs in \eqref{eq:mw-point-count} include $S(\gamma)$ where some entry of $\gamma A^{-1}$ is in $\{0, 1\}$.

Table~\ref{table:fermat-quartic-cancellation-partition-1} shows cancelling pairs in the partition corresponding to $(\frac{1}{4}, \frac{3}{4}, *, *)$.
Table~\ref{table:fermat-quartic-cancellation-partition-2} shows cancelling pairs in the partition corresponding to $(\frac{1}{2}, \frac{3}{4}, \frac{3}{4}, *)$.
Permutations of coordinates account for all terms in \eqref{eq:mw-point-count}.

 \renewcommand{\arraystretch}{1.5}
\begin{table}[ht]
\centering
\begin{tabular}{@{}cc@{}}
\toprule
$(*, *, *, *)$ &
$(*, *, \frac{1}{4}, \frac{3}{4})$ \\
$(*, *, \frac{1}{2}, \frac{1}{2})$ &
$(*, \frac{1}{4}, \frac{1}{4}, \frac{1}{2})$ \\
$(*, \frac{1}{2}, \frac{3}{4}, \frac{3}{4})$ &
$(\frac{1}{4}, \frac{1}{4}, \frac{1}{4}, \frac{1}{4})$ \\
$(\frac{1}{4}, \frac{1}{4}, \frac{3}{4}, \frac{3}{4})$ &
$(\frac{1}{4}, \frac{1}{2}, \frac{1}{2}, \frac{3}{4})$ \\
$(\frac{1}{2}, \frac{1}{2}, \frac{1}{2}, \frac{1}{2})$ &
$(\frac{3}{4}, \frac{3}{4}, \frac{3}{4}, \frac{3}{4})$ \\
\bottomrule
\end{tabular}
\caption{The values of $\gamma A^{-1}$ up to permuatation for $W_A = x_1^4 + x_2^4 + x_3^4 + x_4^4$. The symbol $*$ denotes either $0$ or $1$.}
\label{table:fermat-quartic-partitions}
\end{table}
 \renewcommand{\arraystretch}{1}

\begin{table}[ht]\scriptsize
\centering
\begin{tabular}{cccc@{\quad}|@{\quad}cccc}
\toprule
$I$ & $| \overline{I} |$ & $| \gamma A^{-1} |$ & $\gamma$ term &
$I \cup \{ j \}$ & $| \overline{I \cup \{ j\} } |$ & $| \gamma' A^{-1} |$ & $\gamma'$ term \\
\midrule
$\emptyset$ & $4$ & $1$ & $(-p)^{4} (-p)^{1} \Gamma_p(\frac{1}{4}, \frac{3}{4}, 0, 0)$ &
$\{3\}$ & $3$ & $2$ & $(-p)^{3} (-p)^{2} \Gamma_p(\frac{1}{4}, \frac{3}{4}, 1, 0)$ \\
$\{1\}$ & $3$ & $1$ & $(-p)^{3} (-p)^{1} \Gamma_p(\frac{1}{4}, \frac{3}{4}, 0, 0)$ &
$\{1, 3\}$ & $2$ & $2$ & $(-p)^{2} (-p)^{2} \Gamma_p(\frac{1}{4}, \frac{3}{4}, 1, 0)$ \\
$\{2\}$ & $3$ & $1$ & $(-p)^{3} (-p)^{1} \Gamma_p(\frac{1}{4}, \frac{3}{4}, 0, 0)$ &
$\{2, 3\}$ & $2$ & $2$ & $(-p)^{2} (-p)^{2} \Gamma_p(\frac{1}{4}, \frac{3}{4}, 1, 0)$ \\
$\{4\}$ & $3$ & $2$ & $(-p)^{3} (-p)^{2} \Gamma_p(\frac{1}{4}, \frac{3}{4}, 0, 1)$ &
$\{3, 4\}$ & $2$ & $3$ & $(-p)^{2} (-p)^{3} \Gamma_p(\frac{1}{4}, \frac{3}{4}, 1, 1)$ \\
$\{1, 2\}$ & $2$ & $1$ & $(-p)^{2} (-p)^{1} \Gamma_p(\frac{1}{4}, \frac{3}{4}, 0, 0)$ &
$\{1, 2, 3\}$ & $1$ & $2$ & $(-p)^{1} (-p)^{2} \Gamma_p(\frac{1}{4}, \frac{3}{4}, 1, 0)$ \\
$\{1, 4\}$ & $2$ & $2$ & $(-p)^{2} (-p)^{2} \Gamma_p(\frac{1}{4}, \frac{3}{4}, 0, 1)$ &
$\{1, 3, 4\}$ & $1$ & $3$ & $(-p)^{1} (-p)^{3} \Gamma_p(\frac{1}{4}, \frac{3}{4}, 1, 1)$ \\
$\{2, 4\}$ & $2$ & $2$ & $(-p)^{2} (-p)^{2} \Gamma_p(\frac{1}{4}, \frac{3}{4}, 0, 1)$ &
$\{2, 3, 4\}$ & $1$ & $3$ & $(-p)^{1} (-p)^{3} \Gamma_p(\frac{1}{4}, \frac{3}{4}, 1, 1)$ \\
$\{1, 2, 4\}$ & $1$ & $2$ & $(-p)^{1} (-p)^{2} \Gamma_p(\frac{1}{4}, \frac{3}{4}, 0, 1)$ &
$\{1, 2, 3, 4\}$ & $0$ & $3$ & $(-p)^{0} (-p)^{3} \Gamma_p(\frac{1}{4}, \frac{3}{4}, 1, 1)$ \\
\bottomrule
\end{tabular}
\caption{Cancellation pairs in the $(\frac{1}{4}, \frac{3}{4}, *, *)$ partition for $W_A = x_1^4 + x_2^4 + x_3^4 + x_4^4$.}
\label{table:fermat-quartic-cancellation-partition-1}
\end{table}

\begin{table}[ht]\scriptsize
\centering
\begin{tabular}{cccc@{\quad}|@{\quad}cccc}
\toprule
$I$ & $| \overline{I} |$ & $| \gamma A^{-1} |$ & $\gamma$ term &
$I \cup \{ j \}$ & $| \overline{I \cup \{ j\} } |$ & $| \gamma' A^{-1} |$ & $\gamma'$ term \\
\midrule
$\emptyset$ & $4$ & $2$ & $(-p)^{4} (-p)^{2} \Gamma_p(\frac{1}{2}, \frac{3}{4}, \frac{3}{4}, 0)$ &
$\{4\}$ & $3$ & $3$ & $(-p)^{3} (-p)^{3} \Gamma_p(\frac{1}{2}, \frac{3}{4}, \frac{3}{4}, 1)$ \\
$\{1\}$ & $3$ & $2$ & $(-p)^{3} (-p)^{2} \Gamma_p(\frac{1}{2}, \frac{3}{4}, \frac{3}{4}, 0)$ &
$\{1, 4\}$ & $2$ & $3$ & $(-p)^{2} (-p)^{3} \Gamma_p(\frac{1}{2}, \frac{3}{4}, \frac{3}{4}, 1)$ \\
$\{2\}$ & $3$ & $2$ & $(-p)^{3} (-p)^{2} \Gamma_p(\frac{1}{2}, \frac{3}{4}, \frac{3}{4}, 0)$ &
$\{2, 4\}$ & $2$ & $3$ & $(-p)^{2} (-p)^{3} \Gamma_p(\frac{1}{2}, \frac{3}{4}, \frac{3}{4}, 1)$ \\
$\{3\}$ & $3$ & $2$ & $(-p)^{3} (-p)^{2} \Gamma_p(\frac{1}{2}, \frac{3}{4}, \frac{3}{4}, 0)$ &
$\{3, 4\}$ & $2$ & $3$ & $(-p)^{2} (-p)^{3} \Gamma_p(\frac{1}{2}, \frac{3}{4}, \frac{3}{4}, 1)$ \\
$\{1, 2\}$ & $2$ & $2$ & $(-p)^{2} (-p)^{2} \Gamma_p(\frac{1}{2}, \frac{3}{4}, \frac{3}{4}, 0)$ &
$\{1, 2, 4\}$ & $1$ & $3$ & $(-p)^{1} (-p)^{3} \Gamma_p(\frac{1}{2}, \frac{3}{4}, \frac{3}{4}, 1)$ \\
$\{1, 3\}$ & $2$ & $2$ & $(-p)^{2} (-p)^{2} \Gamma_p(\frac{1}{2}, \frac{3}{4}, \frac{3}{4}, 0)$ &
$\{1, 3, 4\}$ & $1$ & $3$ & $(-p)^{1} (-p)^{3} \Gamma_p(\frac{1}{2}, \frac{3}{4}, \frac{3}{4}, 1)$ \\
$\{2, 3\}$ & $2$ & $2$ & $(-p)^{2} (-p)^{2} \Gamma_p(\frac{1}{2}, \frac{3}{4}, \frac{3}{4}, 0)$ &
$\{2, 3, 4\}$ & $1$ & $3$ & $(-p)^{1} (-p)^{3} \Gamma_p(\frac{1}{2}, \frac{3}{4}, \frac{3}{4}, 1)$ \\
$\{1, 2, 3\}$ & $1$ & $2$ & $(-p)^{1} (-p)^{2} \Gamma_p(\frac{1}{2}, \frac{3}{4}, \frac{3}{4}, 0)$ &
$\{1, 2, 3, 4\}$ & $0$ & $3$ & $(-p)^{0} (-p)^{3} \Gamma_p(\frac{1}{2}, \frac{3}{4}, \frac{3}{4}, 1)$ \\
\bottomrule
\end{tabular}
\caption{Cancellation pairs in the $(\frac{1}{2}, \frac{3}{4}, \frac{3}{4}, *)$ partition ($W_A = x_1^4 + x_2^4 + x_3^4 + x_4^4$).}
\label{table:fermat-quartic-cancellation-partition-2}
\end{table}

\end{example}

The proof of Proposition~\ref{claim:fermat-hypersurface} does not work verbatim for non-Fermat matrices $A$. For instance, for chain matrices $A$ we can have $\gamma_i > 0$ but $(\gamma A^{-1})_i = 0$ for some $i$, which cannot happen in the diagonal case. However, the general approach of grouping terms in \eqref{eq:z-sum} to cancel in pairs can be used on a case-by-case basis, as the $3$-chain example below illustrates.

\begin{example}
Let $W_A(x) = x_1^2x_2 + x_2^2x_3 + x_3^3$. Terms from \eqref{eq:mw-point-count} that correspond to the four basis elements of $H(\cB_A^G,D_A)$ arise from the Monsky--Washnitzer trace just as in the Fermat case. The vertical term contributions add up to $1 + p$, and the remaining two elements are associated to terms containing $S(1,1,1)$ and $S(1, 2, 3)$.
The remaining terms are again given by an equation similar to \eqref{eq:z-sum}, but with a different partitioning of summands of \eqref{eq:trace-diagonal} into $S$-sum terms.

We will form two groups of partitions in this case. In the first group are terms indexed by $(\gamma, I)$ such that $\gamma_i > 0$ if and only if $(\gamma A^{-1})_i > 0$ for all $i \in I$.
These cancel using \eqref{eq:gamma-pair-cancellation} in the same way as in the Fermat case, so we call them \textit{Fermat-like}.
Pairs of Fermat-like terms of the form $\gamma = \sum_{j} e_jA$ for some subset of indices $j$ are shown in pairs that cancel out in Table~\ref{table:3-chain-vertex-cancellation} and Figure~\ref{fig:3-chain-vertex-cancellation}.
We also have Fermat-like terms away from the vertices of the cone, but we can again use \eqref{eq:gamma-pair-cancellation}. These are summarized in Table~\ref{table:3-chain-0-1-2-cancellation}.

The remaining terms from the Monsky--Washnitzer trace can be indexed by $(\gamma, I)$ such that $\gamma_i > 0$ but $(\gamma A^{-1})_i = 0$ for some $i \in I$.
In this case, if we proceeded to form $S$-sums as we did previously, we would include some terms from the Monsky--Washnitzer trace multiple times.
We consequently restrict the summation indices in order to account for each term from \eqref{eq:mw-point-count} exactly once.
For a set of indices $\delta \subset\{ 1, 2, 3\}$ and a partition representative $\gamma_{\alpha, I}$, define the sum
\begin{equation}\label{eq:partial-s-sum}
S^\delta(\gamma_{\alpha, I}) = \prod_{j \notin\delta} c_{(p-1)(\gamma_{\alpha, I} A^{-1})_j}(-p)^{(\gamma_{\alpha, I} A^{-1})_j} \prod_{j\in\delta}  \sum_{k_j \geq 0} c_{(p-1)\left[ (\gamma_{\alpha, I} A^{-1})_j + k_j \right]}  (-p)^{(\gamma_{\alpha, I} A^{-1})_j + k_j}.
\end{equation}
An enumeration of the unaccounted-for sums from the trace shows that the remaining terms can be written as
\begin{equation}\label{eq:chain-delta-z-sums}
\sum_{\alpha} \sum_{I} (-p)^{|\overline{I}|} S^{\delta} (\gamma_{\alpha, I}),
\end{equation}
for the specific $\delta$ and $\gamma_{\alpha,I}$ listed in Table~\ref{table:3-chain-non-fermat-cancellation}. Moreover, the table also shows which terms from \eqref{eq:chain-delta-z-sums} cancel in pairs, which can be straightforwardly seen from \eqref{eq:partial-s-sum}.

\begin{figure}[ht]
\centering

\tikzset{every picture/.style={line width=0.75pt}} 

\begin{tikzpicture}[x=0.75pt,y=0.75pt,yscale=-1,xscale=1]

\draw [color={rgb, 255:red, 155; green, 155; blue, 155 }  ,draw opacity=1 ]   (200,43) -- (200,220) ;
\draw [shift={(200,40)}, rotate = 90] [fill={rgb, 255:red, 155; green, 155; blue, 155 }  ,fill opacity=1 ][line width=0.08]  [draw opacity=0] (5.36,-2.57) -- (0,0) -- (5.36,2.57) -- cycle    ;
\draw [color={rgb, 255:red, 155; green, 155; blue, 155 }  ,draw opacity=1 ]   (377,220) -- (200,220) ;
\draw [shift={(380,220)}, rotate = 180] [fill={rgb, 255:red, 155; green, 155; blue, 155 }  ,fill opacity=1 ][line width=0.08]  [draw opacity=0] (5.36,-2.57) -- (0,0) -- (5.36,2.57) -- cycle    ;
\draw [color={rgb, 255:red, 155; green, 155; blue, 155 }  ,draw opacity=1 ]   (200,160) -- (200,220) ;
\draw [shift={(200,190)}, rotate = 270] [color={rgb, 255:red, 155; green, 155; blue, 155 }  ,draw opacity=1 ][line width=0.75]    (0,3.35) -- (0,-3.35)   ;
\draw [shift={(200,160)}, rotate = 270] [color={rgb, 255:red, 155; green, 155; blue, 155 }  ,draw opacity=1 ][line width=0.75]    (0,3.35) -- (0,-3.35)   ;
\draw [color={rgb, 255:red, 155; green, 155; blue, 155 }  ,draw opacity=1 ]   (200,70) -- (200,130) ;
\draw [shift={(200,130)}, rotate = 270] [color={rgb, 255:red, 155; green, 155; blue, 155 }  ,draw opacity=1 ][line width=0.75]    (0,3.35) -- (0,-3.35)   ;
\draw [shift={(200,100)}, rotate = 270] [color={rgb, 255:red, 155; green, 155; blue, 155 }  ,draw opacity=1 ][line width=0.75]    (0,3.35) -- (0,-3.35)   ;
\draw [shift={(200,70)}, rotate = 270] [color={rgb, 255:red, 155; green, 155; blue, 155 }  ,draw opacity=1 ][line width=0.75]    (0,3.35) -- (0,-3.35)   ;
\draw [color={rgb, 255:red, 155; green, 155; blue, 155 }  ,draw opacity=1 ]   (290,220) -- (230,220) ;
\draw [shift={(230,220)}, rotate = 360] [color={rgb, 255:red, 155; green, 155; blue, 155 }  ,draw opacity=1 ][line width=0.75]    (0,3.35) -- (0,-3.35)   ;
\draw [shift={(260,220)}, rotate = 360] [color={rgb, 255:red, 155; green, 155; blue, 155 }  ,draw opacity=1 ][line width=0.75]    (0,3.35) -- (0,-3.35)   ;
\draw [shift={(290,220)}, rotate = 360] [color={rgb, 255:red, 155; green, 155; blue, 155 }  ,draw opacity=1 ][line width=0.75]    (0,3.35) -- (0,-3.35)   ;
\draw [color={rgb, 255:red, 155; green, 155; blue, 155 }  ,draw opacity=1 ]   (380,220) -- (320,220) ;
\draw [shift={(320,220)}, rotate = 360] [color={rgb, 255:red, 155; green, 155; blue, 155 }  ,draw opacity=1 ][line width=0.75]    (0,3.35) -- (0,-3.35)   ;
\draw [shift={(350,220)}, rotate = 360] [color={rgb, 255:red, 155; green, 155; blue, 155 }  ,draw opacity=1 ][line width=0.75]    (0,3.35) -- (0,-3.35)   ;
\draw [color={rgb, 255:red, 155; green, 155; blue, 155 }  ,draw opacity=1 ]   (347.5,121.66) -- (200,220) ;
\draw [shift={(350,120)}, rotate = 146.31] [fill={rgb, 255:red, 155; green, 155; blue, 155 }  ,fill opacity=1 ][line width=0.08]  [draw opacity=0] (5.36,-2.57) -- (0,0) -- (5.36,2.57) -- cycle    ;
\draw [color={rgb, 255:red, 155; green, 155; blue, 155 }  ,draw opacity=1 ]   (225,201) -- (225,207) ;
\draw [color={rgb, 255:red, 155; green, 155; blue, 155 }  ,draw opacity=1 ]   (250,184) -- (250,190) ;
\draw [color={rgb, 255:red, 155; green, 155; blue, 155 }  ,draw opacity=1 ]   (275,167) -- (275,173) ;
\draw [color={rgb, 255:red, 155; green, 155; blue, 155 }  ,draw opacity=1 ]   (300,151) -- (300,157) ;
\draw [color={rgb, 255:red, 155; green, 155; blue, 155 }  ,draw opacity=1 ]   (325,134) -- (325,140) ;
\draw [color={rgb, 255:red, 155; green, 155; blue, 155 }  ,draw opacity=1 ] [dash pattern={on 0.84pt off 2.51pt}]  (284,204) -- (260,220) ;
\draw [color={rgb, 255:red, 155; green, 155; blue, 155 }  ,draw opacity=1 ] [dash pattern={on 0.84pt off 2.51pt}]  (250,157) -- (250,190) ;
\draw [color={rgb, 255:red, 155; green, 155; blue, 155 }  ,draw opacity=1 ] [dash pattern={on 0.84pt off 2.51pt}]  (250,157) -- (200,190) ;
\draw [color={rgb, 255:red, 155; green, 155; blue, 155 }  ,draw opacity=1 ] [dash pattern={on 0.84pt off 2.51pt}]  (284,204) -- (226,204) ;
\draw    (284,204) -- (200,220) ;
\draw [shift={(242,212)}, rotate = 169.22] [fill={rgb, 255:red, 0; green, 0; blue, 0 }  ][line width=0.08]  [draw opacity=0] (5.36,-2.57) -- (0,0) -- (5.36,2.57) -- (3.56,0) -- cycle    ;
\draw    (200,130) -- (200,220) ;
\draw [shift={(200,220)}, rotate = 90] [color={rgb, 255:red, 0; green, 0; blue, 0 }  ][fill={rgb, 255:red, 0; green, 0; blue, 0 }  ][line width=0.75]      (0, 0) circle [x radius= 1.34, y radius= 1.34]   ;
\draw [shift={(200,130)}, rotate = 90] [color={rgb, 255:red, 0; green, 0; blue, 0 }  ][fill={rgb, 255:red, 0; green, 0; blue, 0 }  ][line width=0.75]      (0, 0) circle [x radius= 1.34, y radius= 1.34]   ;
\draw    (250,157) -- (200,220) ;
\draw    (250,67) -- (250,157) ;
\draw [shift={(250,157)}, rotate = 90] [color={rgb, 255:red, 0; green, 0; blue, 0 }  ][fill={rgb, 255:red, 0; green, 0; blue, 0 }  ][line width=0.75]      (0, 0) circle [x radius= 1.34, y radius= 1.34]   ;
\draw [shift={(250,67)}, rotate = 90] [color={rgb, 255:red, 0; green, 0; blue, 0 }  ][fill={rgb, 255:red, 0; green, 0; blue, 0 }  ][line width=0.75]      (0, 0) circle [x radius= 1.34, y radius= 1.34]   ;
\draw    (284,114) -- (284,204) ;
\draw [shift={(284,204)}, rotate = 90] [color={rgb, 255:red, 0; green, 0; blue, 0 }  ][fill={rgb, 255:red, 0; green, 0; blue, 0 }  ][line width=0.75]      (0, 0) circle [x radius= 1.34, y radius= 1.34]   ;
\draw [shift={(284,114)}, rotate = 90] [color={rgb, 255:red, 0; green, 0; blue, 0 }  ][fill={rgb, 255:red, 0; green, 0; blue, 0 }  ][line width=0.75]      (0, 0) circle [x radius= 1.34, y radius= 1.34]   ;
\draw    (284,114) -- (200,130) ;
\draw [shift={(242,122)}, rotate = 169.22] [fill={rgb, 255:red, 0; green, 0; blue, 0 }  ][line width=0.08]  [draw opacity=0] (5.36,-2.57) -- (0,0) -- (5.36,2.57) -- (3.56,0) -- cycle    ;
\draw    (334,51) -- (250,67) ;
\draw [shift={(292,59)}, rotate = 169.22] [fill={rgb, 255:red, 0; green, 0; blue, 0 }  ][line width=0.08]  [draw opacity=0] (5.36,-2.57) -- (0,0) -- (5.36,2.57) -- (3.56,0) -- cycle    ;
\draw    (334,51) -- (284,114) ;
\draw    (250,67) -- (200,130) ;
\draw [color={rgb, 255:red, 155; green, 155; blue, 155 }  ,draw opacity=1 ] [dash pattern={on 0.84pt off 2.51pt}]  (334,141) -- (334,174) ;
\draw    (334,51) -- (334,141) ;
\draw [shift={(334,141)}, rotate = 90] [color={rgb, 255:red, 0; green, 0; blue, 0 }  ][fill={rgb, 255:red, 0; green, 0; blue, 0 }  ][line width=0.75]      (0, 0) circle [x radius= 1.34, y radius= 1.34]   ;
\draw [shift={(334,51)}, rotate = 90] [color={rgb, 255:red, 0; green, 0; blue, 0 }  ][fill={rgb, 255:red, 0; green, 0; blue, 0 }  ][line width=0.75]      (0, 0) circle [x radius= 1.34, y radius= 1.34]   ;
\draw    (334,141) -- (284,204) ;
\draw    (334,141) -- (250,157) ;
\draw [shift={(292,149)}, rotate = 169.22] [fill={rgb, 255:red, 0; green, 0; blue, 0 }  ][line width=0.08]  [draw opacity=0] (5.36,-2.57) -- (0,0) -- (5.36,2.57) -- (3.56,0) -- cycle    ;

\draw (182,214) node [anchor=north west][inner sep=0.75pt]  [font=\scriptsize] [align=left] {(a)};
\draw (287,198) node [anchor=north west][inner sep=0.75pt]  [font=\scriptsize] [align=left] {(b)};
\draw (233,150) node [anchor=north west][inner sep=0.75pt]  [font=\scriptsize] [align=left] {(c)};
\draw (337,135) node [anchor=north west][inner sep=0.75pt]  [font=\scriptsize] [align=left] {(d)};
\draw (181,124) node [anchor=north west][inner sep=0.75pt]  [font=\scriptsize] [align=left] {(e)};
\draw (287,109) node [anchor=north west][inner sep=0.75pt]  [font=\scriptsize] [align=left] {(f)};
\draw (232,60) node [anchor=north west][inner sep=0.75pt]  [font=\scriptsize] [align=left] {(g)};
\draw (337,45) node [anchor=north west][inner sep=0.75pt]  [font=\scriptsize] [align=left] {(h)};
\draw (372,207.4) node [anchor=north west][inner sep=0.75pt]  [font=\scriptsize]  {$\textcolor[rgb]{0.5,0.5,0.5}{x_{1}}$};
\draw (342,110.4) node [anchor=north west][inner sep=0.75pt]  [font=\scriptsize]  {$\textcolor[rgb]{0.5,0.5,0.5}{x}\textcolor[rgb]{0.5,0.5,0.5}{_{2}}$};
\draw (184,33.4) node [anchor=north west][inner sep=0.75pt]  [font=\scriptsize]  {$\textcolor[rgb]{0.5,0.5,0.5}{x}\textcolor[rgb]{0.5,0.5,0.5}{_{3}}$};

\end{tikzpicture}
\caption{Fundamental region of the lattice $\Z^3 /\Z^3 A$ ($W_A = x_1^2 x_2 + x_2^2 x_3 + x_3^3$).} \label{fig:3-chain-vertex-cancellation}
\end{figure}
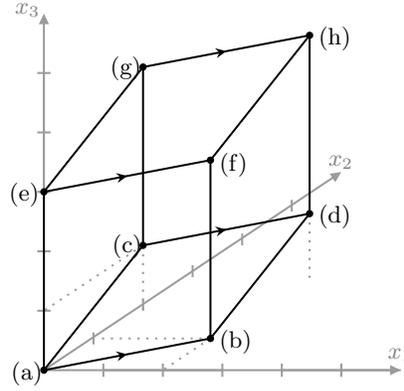

\begin{table}[ht]\footnotesize
\centering
\begin{tabular}{cccc@{\quad}|@{\quad}cccc}
\toprule
label & $I$ & $\gamma$ & $\gamma$ term &
label & $I \cup \{ j \}$ & $\gamma'$ & $\gamma'$ term \\
\midrule
(a) & $\emptyset$ & $(0,0,0)$ & $(-p)^{ 3 } \Gamma_p(0,0,0)$ & (b) & $\{1\}$ & $(2,1,0)$ & $(-p) (-p)^{ 2 } \Gamma_p(1,0,0)$ \\
(c) & $\{2\}$ & $(0,2,1)$ & $(-p) (-p)^{ 2 } \Gamma_p(0,1,0)$ & (d) & $\{1,2\}$ & $(2,3,1)$ & $(-p)^{ 2 } (-p) \Gamma_p(1,1,0)$ \\
(e) & $\{3\}$ & $(0,0,3)$ & $(-p) (-p)^{ 2 } \Gamma_p(0,0,1)$ & (f) & $\{1,3\}$ & $(2,1,3)$ & $(-p)^{ 2 } (-p) \Gamma_p(1,0,1)$ \\
(g) & $\{2,3\}$ & $(0,2,4)$ & $(-p)^{ 2 } (-p) \Gamma_p(0,1,1)$ & (h) & $\{1,2,3\}$ & $(2,3,4)$ & $(-p)^{ 3 }  \Gamma_p(1,1,1)$ \\
\bottomrule
\end{tabular}
\caption{Cancellation pairs corresponding to vertices of the fundamental region of the lattice $\Z^3 / \Z^3 A$ ($W_A = x_1^2 x_2 + x_2^2 x_3 + x_3^3$).}
\label{table:3-chain-vertex-cancellation}
\end{table}

\begin{table}[ht]\footnotesize
\centering
\begin{tabular}{ccc@{\quad}|@{\quad}ccc}
\toprule
$I$ & $\gamma$ & $\gamma$ term &
$I \cup \{ j \}$ & $\gamma'$ & $\gamma'$ term \\
\midrule
$\emptyset$ & $(0,1,2)$ & $(-p) (-p)^{ 3 } \Gamma_p(0,\frac{1}{2},\frac{1}{2})$ & $\{1\}$ & $(2,2,2)$ & $(-p)^{ 2 } (-p)^{ 2 } \Gamma_p(1,\frac{1}{2},\frac{1}{2})$ \\
$\{2\}$ & $(0,3,3)$ & $(-p)^{ 2 } (-p)^{ 2 } \Gamma_p(0,\frac{3}{2},\frac{1}{2})$ & $\{1,2\}$ & $(2,4,3)$ & $(-p)^{ 3 } (-p) \Gamma_p(1,\frac{3}{2},\frac{1}{2})$ \\
$\{3\}$ & $(0,1,5)$ & $(-p)^{ 2 } (-p)^{ 2 } \Gamma_p(0,\frac{1}{2},\frac{3}{2})$ & $\{1,3\}$ & $(2,2,5)$ & $(-p)^{ 3 } (-p) \Gamma_p(1,\frac{1}{2},\frac{3}{2})$ \\
$\{2,3\}$ & $(0,3,6)$ & $(-p)^{ 3 } (-p) \Gamma_p(0,\frac{3}{2},\frac{3}{2})$ & $\{1,2,3\}$ & $(2,4,6)$ & $(-p)^{ 4 }  \Gamma_p(1,\frac{3}{2},\frac{3}{2})$ \\
\bottomrule
\end{tabular}
\caption{Cancellation pairs for $W_A = x_1^2 x_2 + x_2^2 x_3 + x_3^3$ corresponding to $\gamma = (0,1,2) + \sum_i t_i e_i A$ in various $I$-regions.}
\label{table:3-chain-0-1-2-cancellation}
\end{table}

\begin{table}[ht]\footnotesize
\centering
\begin{tabular}{ccc@{\quad}|@{\quad}ccc}
\toprule
$I$ & $\gamma$ & $\gamma$ term &
$I \cup \{ j \}$ & $\gamma'$ & $\gamma'$ term \\
\midrule
$\{2\}$ & $(2,1,0)$ & $(-p)^{ 2 } S^{ \{ 1,3 \} }(1,0,0)$ & $\{2,3\}$ & $(2,1,3)$ & $(-p) S^{ \{ 1,3 \} }(1,0,1)$ \\
$\{1,2\}$ & $(2,1,0)$ & $(-p) S^{ \{ 1,3 \} }(1,0,0)$ & $\{1,2,3\}$ & $(2,1,3)$ & $S^{ \{ 1,3 \} }(1,0,1)$ \\
$\{3\}$ & $(0,2,1)$ & $(-p)^{ 2 } S^{ \{ 1,2 \} }(0,1,0)$ & $\{1,3\}$ & $(2,3,1)$ & $(-p) S^{ \{ 1,2 \} }(1,1,0)$ \\
$\{2,3\}$ & $(0,2,1)$ & $(-p) S^{ \{ 1,2 \} }(0,1,0)$ & $\{1,2,3\}$ & $(2,3,1)$ & $S^{ \{ 1,2 \} }(1,1,0)$ \\
\bottomrule
\end{tabular}
\caption{Cancellation pairs for $W_A = x_1^2 x_2 + x_2^2 x_3 + x_3^3$ corresponding to terms which don't appear in the Fermat case.}
\label{table:3-chain-non-fermat-cancellation}
\end{table}

\end{example}

\end{document}